\numberwithin{equation}{section}
\newtheorem{theorem}{Theorem}[section]
\newtheorem{proposition}{Proposition}[section]
\newtheorem{corollary}{Corollary}[section]
\newtheorem{remark}{Remark}[section]
\newtheorem{definition}{Definition}[section]
\theoremstyle{definition}
\begin{document}
\bibliographystyle{amsplain}
\title{{{
Orthogonal polynomials on the real line
corresponding to a perturbed chain sequence
}}}
\author{
Kiran Kumar Behera
}
\address{
Department of Mathematics,
Indian Institute of Technology, Roorkee-247667,
Uttarakhand, India
}
\email{krn.behera@gmail.com}
\author{
A. Swaminathan
}
\address{
Department of  Mathematics  \\
Indian Institute of Technology, Roorkee-247 667,
Uttarkhand,  India
}
\email{swamifma@iitr.ac.in, mathswami@gmail.com}
\bigskip
\begin{abstract}
In recent years, chain sequences and their perturbations have played a significant role in characterising the orthogonal polynomials both on the real line as well as on the unit circle.
In this note, a particular disturbance of the chain sequence related to orthogonal polynomials having their true interval of orthogonality as a subset of $[0,\infty)$ is studied leading to an important consequence related to the kernel polynomials.
Such perturbations are shown to be related to transformations of symmetric measures.
An illustration using the generalized Laguerre polynomials is also provided.
\end{abstract}
\subjclass[2010]{42C05, 33C45, 15B99 }
\keywords{Moment functional, Jacobi matrices,
Chain sequences, Orthogonal polynomials,
Kernel polynomials}
\maketitle
\pagestyle{myheadings}
\markboth
{Kiran Kumar Behera and A. Swaminathan}
{Orthogonal polynomials on the real line
 corresponding to a perturbed chain sequences}

\section{Preliminaries}
Let $\mathcal{L}$ be a moment functional
defined on the vector space of polynomials
with real coefficients. The quantities
$\mu_k=\mathcal{L}[x^k]$, $k\geq0$
are called the moments of order $k$ and are used to
construct the Hankel matrix
$H=(\mu_{i+j})_{i,j=0}^{\infty}$.
In case the principal submatrices $H_n$ are non-singular,
$\mathcal{L}$ is said to be quasi-definite and
there always exists a
sequence of polynomials
$\{p_n(x)\}_{n=0}^{\infty}$,
where for $n\geq0$,
\begin{align*}
p_{n}(x)=c_{n,n}x^n+c_{n,n-1}x^{n-1}+
           \cdots+c_{n,1}x+c_{n,0},
\quad c_{n,n}>0,
\end{align*}
satisfying the property,
\begin{align*}
\mathcal{L}[p_n(t)p_m(t)]
=\delta_{m,n},
\quad m,n\geq0.
\end{align*}
The sequence
$\{p_n(x)\}_{n=0}^{\infty}$
is said to be orthonormal
with respect to $\mathcal{L}$.

Following the notation used in
\cite[Page 12]{Simon-book-1},
the sequence
$\{p_n(x)\}_{n=0}^{\infty}$
satisfies the three
term recurrence relation
(the so called Favard's Theorem),
\begin{align}
a_{n+1}p_{n+1}(x)=
(x-b_{n+1})p_{n}(x)-a_{n}p_{n-1}(x),
\quad n\geq0,
\label{eqn:three term recurrence relation for orthonormal}
\end{align}
%
where $p_{-1}(x)$ and $p_{0}(x)$
are pre-defined.
Further, $b_{n}\in\mathbb{R}$
and $a_n>0$, $n\geq1$
and are related to the coefficients
of $p_n(x)$ as
\cite{garza-paco-szegotrans-asso-poly-2009},
\begin{align*}
a_n=\frac{c_{n-1,n-1}}{c_{n,n}}
\quad\mbox{and}\quad
b_{n+1}=\frac{c_{n,n-1}}{c_{n,n}}-
\frac{c_{n+1,n}}{c_{n+1,n+1}},
\quad
n\geq1.
\end{align*}
Moreover, though $a_0$ is arbitrary,
it is usually taken
to be equal to $\mathcal{L}[1]=\mu_0>0$ for convenience.
It may be noted that the recurrence coefficients
$\{b_{n+1}\}$ and $\{a_n\}$ satisfy the above conditions
if and only if $\mathcal{L}$ is positive definite, that is,
if $\mathcal{L}[r(x)]>0$ for every polynomial $r(x)$
such that $r(x)>0$ for all $x\in\mathbb{R}$.

In many cases, the polynomials $p_n(x)$
are normalized
to make their leading coefficient equal to one.
These monic orthogonal polynomials denoted as
$\{\mathcal{P}_n(x)\}_{n=0}^{\infty}$
and defined by
$c_{n,n}\mathcal{P}_n(x)=p_{n}(x)$,
$n\geq0$,
satisfy the recurrence relation
\begin{align}
\mathcal{P}_{n+1}(x)=
(x-b_{n+1})\mathcal{P}_{n}(x)-
a_{n}^{2}\mathcal{P}_{n-1}(x),
\quad n\geq0,
\label{eqn:three term recurrence relation for orthogonal}
\end{align}
with $\mathcal{P}_{-1}(x)=0$ and
$\mathcal{P}_{0}(x)=1$.
In the sequel, these will be called an orthogonal
polynomials sequence (OPS).
The system
\eqref{eqn:three term recurrence relation for orthogonal}
is usually written in the more precise form:
$x\mathbb{\bar{P}}=J\mathbb{\bar{P}}$, where
$\mathbb{\bar{P}}=
[\mathcal{P}_0(x)\,\, \mathcal{P}_1(x)\,\, \mathcal{P}_2(x)\,\, \cdots]^{T}$
and
\begin{align}
\label{eqn:monic Jacobi matrix for monic OP}
J=
\left(
  \begin{array}{ccccc}
    b_1    & 1      & 0      & 0      & \cdots \\
    a_1^2  & b_2    & 1      & 0      & \cdots \\
    0      & a_2^2  & b_3    & 1      & \cdots \\
    0      & 0      & a_3^2  & b_4    & \cdots \\
    \vdots & \vdots & \vdots & \vdots & \ddots \\
  \end{array}
\right)
\end{align}
is called the monic Jacobi matrix associated with
the orthogonal polynomials generated by
\eqref{eqn:three term recurrence relation for orthogonal}.
The polynomials defined as
%
$\mathcal{L}[(\mathcal{P}_n(y)-\mathcal{P}_n(t))/(y-t)]$,
$n\geq0$,
%
are called the associated polynomials
associated with $\mu$ and
constitute a second solution of the
recurrence relation
\begin{align*}
z_{n+1}=(x-b_{n+1})z_n-a_n^{2}z_{n-1},
\quad n\geq1,
\end{align*}
with the modified initial conditions
$z_0=0$ and $z_1=1$.
Following the notation used in \cite{Chihara_book},
these polynomials will be denoted as $\mathcal{P}^{(1)}_n(x)$, $n\geq0$.
By induction, it can be seen that
$\mathcal{P}^{(1)}_n(x)$, $n\geq1$
is a monic polynomial of degree $n-1$.

It is interesting to note that if $\mathcal{L}$
is positive definite, it can be used to define an
inner product on the space of polynomials. In such
a case, there exists a positive measure $\mu(t)$
supported on a subset $\mathbb{E}$
of the real line $\mathbb{R}$ with $\mathbb{\bar{E}}$ denoting the smallest closed interval containing this support. The linear functional now has the
representation $\mathcal{L}[t^k]=\int_{\mathbb{E}}t^kd\mu(t)$.
Further, the Stieltjes
transform of the measure
\begin{align*}
\mathcal{S}(x)=\int_{\mathbb{E}}\dfrac{d\mu(t)}{x-t},
\end{align*}
yields the Jacobi continued fraction expansion
%
%
%
\begin{align}
\int_{\mathbb{E}}\dfrac{d\mu(t)}{x-t}=\dfrac{1}{x-b_1}
\begin{array}{cc}\\$-$\end{array}
\dfrac{a_1^2}{x-b_2}
\begin{array}{cc}\\$-$\end{array}
\dfrac{a_2^2}{x-b_3}
\begin{array}{cc}\\$-$\end{array}
\ldots
\label{eqn:Jacobi continued fraction definition}
\end{align}
where the $\{a_n^2\}$ and $\{b_n\}$
are the same coefficients appearing in
\eqref{eqn:three term recurrence relation for orthogonal}.
The Stieltjes function $\mathcal{S}(x)$
plays a very fundamental role in the theory of
orthogonal polynomials on the real line.
It admits the power series
expansion at infinity
\begin{align*}
\mathcal{S}(x)=
\dfrac{\mu_0}{x}+\dfrac{\mu_1}{x^2}+\dfrac{\mu_2}{x^3}+\cdots
\end{align*}
and hence acts as the generating
function for the moments $\mu_k's$
associated with $d\mu(t)$.
Further,
the $n^{th}$ convergent of the continued fraction
in the right hand side of
\eqref{eqn:Jacobi continued fraction definition}
is easily seen to be
$\mathcal{P}^{(1)}_n(x)/\mathcal{P}_n(x)$
and hence as
$n\rightarrow\infty$,
$\mathcal{P}^{(1)}_n(x)/\mathcal{P}_{n}(x)$
converges uniformly to $\frac{1}{\mu_0}\mathcal{S}(x)$
on compact subsets of $\mathbb{\bar{C}}\setminus\mathbb{\bar{E}}$
\cite{Ranga-chain-seq-symmetric-OP-JCAM-2002}. Here
$\mathbb{\bar{C}}$ is the extended complex plane.

An equivalence transformation of
the $n^{th}$ convergent of the continued fraction in
\eqref{eqn:Jacobi continued fraction definition}
yields \cite{Ranga-chain-seq-symmetric-OP-JCAM-2002}
%
%
\begin{align*}
\mathcal{F}_n(x)=
\dfrac{1}{1}
\begin{array}{cc}\\$-$\end{array}
\dfrac{d_1(x)}{1}
\begin{array}{cc}\\$-$\end{array}
\dfrac{d_2(x)}{1}
\begin{array}{cc}\\$-$\end{array}
\ldots
\begin{array}{cc}\\$-$\end{array}
\dfrac{d_{n-1}(x)}{1}
\end{align*}
where
\begin{align*}
\mathcal{F}_n(x)=
(x-b_1)\dfrac{\mathcal{P}^{(1)}_n(x)}{\mathcal{P}_{n}(x)}
\quad\mbox{and}\quad
d_{n}(x)=\dfrac{a_n^2}{(x-b_{n})(x-b_{n+1})}.
\end{align*}
The quantities $d_n(t)$ can also be obtained,
\cite[Page 110]{Chihara_book},
from the recurrence relation
\eqref{eqn:three term recurrence relation for orthogonal}
as
\begin{align}
\label{eqn:min parameters from ttrr chihara theorem}
d_n(t)=\dfrac{\mathcal{P}_n(t)}
             {(t-b_{n})\mathcal{P}_{n-1}(t)}
       \left[1-\dfrac{\mathcal{P}_{n+1}(t)}
                     {(t-b_{n+1})\mathcal{P}_{n}(t)}
       \right],
\quad n\geq1.
\end{align}
Such structures,
called chain sequences have been studied in
\cite{Wall_book},
followed by a systematic treatment in
\cite{Chihara_book}
(see also \cite[Section 7.2]{Ismail_book}),
particularly in the context of orthogonal
polynomials on the real line.

Formally stated, a sequence
$\{d_n\}_{n=1}^{\infty}$
is called a positive chain sequence if
it can be expressed in terms of another sequence
$\{g_n\}_{n=0}^{\infty}$
as
$d_n=(1-g_{n-1})g_n$, $n\geq1$.
Here,
$\{g_n\}_{n=0}^{\infty}$
called a parameter sequence of the chain sequence
$\{d_n\}_{n=1}^{\infty}$
is such that $0\leq g_0<1$ and
$0<g_n<1$, $n\geq1$.
The parameter sequence is in general not unique,
and in such cases
it is interesting to study the
bounds for each parameter
$g_n$, $n\geq0$.
The minimal parameter sequence
$\{m_n\}_{n=0}^{\infty}$
of the positive chain sequence $\{d_n\}$
is defined as $m_0=0$, $0<m_n<1$,
with $d_n=(1-m_{n-1})m_n$, $n\geq1$.
Thus, for instance,
the minimal parameter sequence of the chain sequence
$\{d_n(t)\}_{n=1}^{\infty}$
as defined in
\eqref{eqn:min parameters from ttrr chihara theorem}
is given by
$m_n(t)=
[1-\mathcal{P}_{n+1}(t)/(t-b_{n+1})\mathcal{P}_n(t)]$,
$n\geq0$.
At the other end,
there is the maximal parameter sequence
$\{M_n\}_{n=0}^{\infty}$,
whose parameters are defined as,
\begin{align*}
M_k=
\sup\{g_k:\{g_n\}\mbox{ any other parameter sequence of }\{d_n\}\}
\end{align*}
where $\sup{}$ denotes the supremum of the set.
It is known that
$m_k\leq g_k\leq M_k$, $k\geq0$,
for any parameter sequence
$\{g_n\}$ of $\{d_n\}$.
In case, $M_0=m_0=0$,
the parameter sequence is
unique and $\{d_n\}$ is called a
single parameter positive chain sequence,
abbreviated as SPPCS in the sequel.
\section{Symmetric Orthogonal Polynomials}
A positive measure $d\phi(x)$ is called symmetric if it satisfies
$d\phi(-x)=-d\phi(x)$. The monic polynomials $\mathcal{S}_n(x)$, $n\geq0$, which are orthogonal with respect to $d\phi(x)$ satisfy
\begin{align*}
\mathcal{S}_n(x)=x\mathcal{S}_{n-1}(x)-
\gamma_n^{\phi}\mathcal{S}_{n-2}(x),
\quad n\geq1,
\end{align*}
with $\mathcal{S}_{-1}(x)=0$ and $\mathcal{S}_0(x)=1$.

In a series of research articles
\cite{Ranga-Berti-companion-OP-applications,
Ranga-symmetric-OP-laurent-polynomials,
Ranga-companion-orthogonal-polynomial,
Ranga-McCabe-symmetry-strong-distributions},
many properties of symmetric orthogonal polynomials have been discussed in the context of certain orthogonal Laurent polynomials. Particularly, if $d\phi_1(x)$ and $d\phi_2(x)$ are two different symmetric distributions related by the Christoffel transformation as
$d\phi_2(x)=(1+kx^2)d\phi_1(x)$, where $k$ is positive real constant, then
\begin{theorem}\textup{\cite[Theorem 1]{Ranga-companion-orthogonal-polynomial}}
  Associated with $d\phi_1(x)$ and $d\phi_2(x)$ there exists a sequence of positive numbers $\{l_n\}$, with $l_0=1$ and $l_n>1$ for $n\geq1$, such that
  \begin{align}
  \label{eqn: alpha_n in terms of ln}
  4k\gamma_{n+1}^{\phi_1}=(l_n-1)(l_{n-1}+1)
  \quad\mbox{and}\quad
  4k\gamma_{n+1}^{\phi_2}=(l_n-1)(l_{n+1}+1)
  \end{align}
\end{theorem}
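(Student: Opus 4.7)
The plan is to construct $\{l_n\}$ explicitly from values of the OPS for $d\phi_1$ at the root $y=i/\sqrt{k}$ of $1+kx^2$, and to derive both identities from a single Christoffel-type reduction of $\mathcal{S}_n^{\phi_2}$ in the basis $\{\mathcal{S}_n^{\phi_1}\}$, where I write $\mathcal{S}_n^{\phi_i}$ for the monic orthogonal polynomial of degree $n$ with respect to $d\phi_i$.

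First, set $y=i/\sqrt{k}$ (so that $1+ky^2=0$ and $y^{-2}=-k$) and put $\hat T_n := \mathcal{S}_n^{\phi_1}(y)/y^n$; the parity $\mathcal{S}_n^{\phi_1}(-x)=(-1)^n\mathcal{S}_n^{\phi_1}(x)$ ensures $\hat T_n\in\mathbb{R}$. Dividing the symmetric three-term recurrence evaluated at $y$ by $y^n$ gives
\[
\hat T_{n+1} = \hat T_n + k\gamma_{n+1}^{\phi_1}\,\hat T_{n-1},\qquad \hat T_0=\hat T_1=1,
\]
and induction on $n$ (using $\gamma_{n+1}^{\phi_1}>0$) shows $\hat T_n>0$ and $\hat T_{n+1}>\hat T_n$ for $n\geq 1$.

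Next I would derive a two-term Christoffel identity relating the two OPS. Since $(1+kx^2)\mathcal{S}_n^{\phi_2}$ has degree $n+2$ and is $d\phi_1$-orthogonal to every polynomial of degree $<n$, it must be a linear combination of $\mathcal{S}_n^{\phi_1},\mathcal{S}_{n+1}^{\phi_1},\mathcal{S}_{n+2}^{\phi_1}$; parity kills the middle coefficient and matching leading terms fixes the top one, giving
\[
(1+kx^2)\mathcal{S}_n^{\phi_2}(x) = k\,\mathcal{S}_{n+2}^{\phi_1}(x) + B_n\,\mathcal{S}_n^{\phi_1}(x).
\]
Evaluating at $y$ identifies $B_n=\hat T_{n+2}/\hat T_n$. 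Taking the $d\phi_1$-inner product of this identity with $\mathcal{S}_n^{\phi_2}$ and invoking orthogonality together with monicity yields
\[
\|\mathcal{S}_n^{\phi_2}\|_{\phi_2}^2 = \frac{\hat T_{n+2}}{\hat T_n}\,\|\mathcal{S}_n^{\phi_1}\|_{\phi_1}^2,\qquad \gamma_{n+1}^{\phi_2} = \frac{\hat T_{n+2}\hat T_{n-1}}{\hat T_n \hat T_{n+1}}\,\gamma_{n+1}^{\phi_1},
\]
the second equality coming from $\gamma_{n+1}^{\phi}=\|\mathcal{S}_n^{\phi}\|_\phi^2/\|\mathcal{S}_{n-1}^{\phi}\|_\phi^2$.

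Finally I would define $l_n:=2\hat T_{n+1}/\hat T_n-1$; then $l_0=1$, and $l_n>1$ for $n\geq 1$ by the monotonicity of $\hat T_n$ established above. The recurrence for $\hat T_n$ rearranges to $l_n-1 = 2k\gamma_{n+1}^{\phi_1}\hat T_{n-1}/\hat T_n$, while $l_{n-1}+1=2\hat T_n/\hat T_{n-1}$ and $l_{n+1}+1=2\hat T_{n+2}/\hat T_{n+1}$; multiplying these out produces both identities in \eqref{eqn: alpha_n in terms of ln} simultaneously. The main obstacle is the Christoffel step: isolating the two-term reduction correctly and threading the complex normalization $y^2=-1/k$ through without sign errors. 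Once $\hat T_n$ is shown to be real, positive, and strictly increasing, the definition $l_n=2\hat T_{n+1}/\hat T_n-1$ is essentially forced by the shape of the target identities.
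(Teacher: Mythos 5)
Your proof is correct, and it takes a genuinely different route from the one the paper relies on. The paper does not prove the statement itself: it quotes it from Sri Ranga's work and sketches the original derivation, which passes through the transformation $t(x)=(\sqrt{\alpha x^2+\beta}+\sqrt{\alpha}x)^2$, strong distributions $d\psi_i$ on $(0,\infty)$, the ScS/S$\bar{\mathrm{c}}$S symmetry classes, and companion orthogonal (Laurent) polynomials, with $l_n$ defined structurally via $l_n^2-1=\alpha^{\psi_0}_{n+1}/\beta_n^{\psi_0}$. You instead work entirely on the real line with the symmetric measures: evaluating the three-term recurrence at the zero $y=i/\sqrt{k}$ of $1+kx^2$ gives the real, positive, increasing sequence $\hat T_n$ with $\hat T_{n+1}=\hat T_n+k\gamma^{\phi_1}_{n+1}\hat T_{n-1}$, the parity-reduced Christoffel identity $(1+kx^2)\mathcal{S}_n^{\phi_2}=k\,\mathcal{S}_{n+2}^{\phi_1}+(\hat T_{n+2}/\hat T_n)\mathcal{S}_n^{\phi_1}$ yields $\gamma^{\phi_2}_{n+1}=\gamma^{\phi_1}_{n+1}\hat T_{n+2}\hat T_{n-1}/(\hat T_n\hat T_{n+1})$, and the explicit choice $l_n=2\hat T_{n+1}/\hat T_n-1$ makes both identities in \eqref{eqn: alpha_n in terms of ln} immediate; since the theorem only asserts existence of such a sequence, this is a complete proof (I checked the key computations: $-ky^2=1$ gives $B_n=\hat T_{n+2}/\hat T_n$, and $(l_n-1)(l_{n-1}+1)=4k\gamma^{\phi_1}_{n+1}$, $(l_n-1)(l_{n+1}+1)=4k\gamma^{\phi_2}_{n+1}$ follow by cancellation). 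What the two approaches buy is different: yours is shorter, elementary and self-contained, with $l_n$ given explicitly in terms of the values $\mathcal{S}_n^{\phi_1}(\pm i/\sqrt{k})$ (essentially the Christoffel/Wall parameters at the mass point of the perturbation); the route through $(0,\infty)$ used by the cited source is heavier but exhibits $l_n$ through the coefficients $\alpha^{\psi}_{n+1},\beta^{\psi}_{n}$ of the strong distributions, which is precisely the structural information the present paper exploits afterwards when it imposes conditions such as $\alpha_2^{\psi_0}/\beta_1^{\psi_0}=\alpha_5^{\psi_0}/\beta_4^{\psi_0}$ to manufacture the special pattern of $\{l_n\}$.
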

This result was established using the transformation
$t(x)=(\sqrt{\alpha x^2+\beta}+\sqrt{\alpha}x)^2$ for $x\in(-\infty, \infty)$ and the monic polynomials $\{\mathcal{B}_n(t)\}_{n=0}^{\infty}$,
uniquely defined by
\begin{align*}
\int_{0}^{\infty}t^{-n+s}\mathcal{B}_n(t)d\psi(t)=0,
\quad
0\leq s \leq n-1,
\end{align*}
where $d\psi(t)$ is strong distribution on $(0,\infty)$. Note that if $x=d$
corresponds to $t=b$, the transformation maps $(-d,d)$ to
$(\beta^2/b,b)\subseteq (0,\infty)$. For any strong distribution $d\psi(t)$ on $(0,\infty)$, the monic polynomials $\{\mathcal{B}_{n}(t)\}_{n=0}^{\infty}$ satisfy
\begin{align*}
\mathcal{B}_{n+1}(t)=(t-\beta_{n+1}^{\psi})\mathcal{B}_{n}(t)-
        \alpha_{n+1}^{\psi}t \mathcal{B}_{n-1}(t),
\quad n\geq1,
\end{align*}
with $\mathcal{B}_{0}(t)=1$ and $\mathcal{B}_{1}(t)=t-\beta_1^{\psi}$. Further, the unique coefficients $\beta_{n+1}^{\psi}$, $n\geq0$ and
$\alpha_{n+1}^{\psi}$, $n\geq1$ are all positive real numbers.

Two kinds of strong distributions play important role in this analysis. The first is the ScS$(a,b)$ distribution and the second is the S$\bar{c}$S$(a,b)$ distribution. A strong distribution $d\psi(t)$ with its support inside $(\beta^2/b,b)$ is called a ScS$(\beta^2/b,b)$ distribution if
\cite{Ranga-companion-orthogonal-polynomial}
\begin{align*}
\dfrac{d\psi(t)}{\sqrt{t}}=\dfrac{d\psi(\beta^2/t)}{\beta/\sqrt{t}}, \quad t\in(\beta^2/b,b).
\end{align*}
A strong distribution $d\psi(t)$ with its support inside
$(\beta^2/b,b)$ is called a S$\bar{c}$S$(\beta^2/b,b)$ distribution if
$d\psi(t)=-d\psi(\beta^2/b)$, $t\in(\beta^2/b,b)$
\cite{Ranga-companion-orthogonal-polynomial}.
Then it is known that
$d\psi_2(t)$ is a ScS$(\beta^2/b,b)$ distribution if and only if
$(t+\beta)d\psi_0(t)$ is a S$\bar{c}$S$(\beta^2/b,b)$ distribution.
Further, if
$l_n^2-1=\alpha^{\psi_0}_{n+1}/\beta_n^{\psi_0}$, then
\begin{align*}
\alpha_{n+1}^{\psi_2}=\beta(l_n-1)(l_{n+1}+1),
\quad n\geq1.
\end{align*}
Conversely, $d\psi_0(t)$ is a S$\bar{c}$S$(\beta^2/b,b)$ distribution if and only if $\frac{t+\beta}{t}d\psi_1(t)$ is a ScS$(\beta^2/b,b)$ distribution and in this case
\begin{align*}
\alpha_{n+1}^{\psi_1}=\beta(l_n-1)(l_{n-1}+1),
\quad n\geq1.
\end{align*}
Then choosing
\begin{align*}
d\psi_1(t)=A_1\dfrac{t}{t+\beta}d\phi_1(x(t)),
\quad
d\psi_2(t)=A_2\dfrac{t}{t+\beta}d\phi_2(x(t)),
\end{align*}
with $k=\alpha/\beta$ and appropriate choices of $A_1$ and $A_2$, the relations \eqref{eqn: alpha_n in terms of ln} are obtained.

In this note, we consider the particular case when the sequence $\{l_n\}$ satisfy $l_1\neq l_2$ and
\begin{align*}
l_1&=l_4=l_5=l_8=l_9=\cdots\\
l_2&=l_3=l_6=l_7=l_{10}=\cdots
\end{align*}
This sequence can be obtained from appropriate conditions imposed on the coefficients $\beta^{\psi}_{n+1}$ and $\alpha^{\psi}_{n+1}$, for instance, choosing the coefficients such that they satisfy
$\frac{\alpha_2^{\psi_0}}{\beta_1^{\psi_0}}=
\frac{\alpha_5^{\psi_0}}{\beta_4^{\psi_0}}$ and so on.
Consequently, the coefficients associated with the Christoffel transformation mentioned above satisfy
\begin{align*}
\{\gamma_1^{\phi_2},\gamma^{\phi_2}_2,\gamma^{\phi_2}_3,
\gamma^{\phi_2}_4,\cdots\}
=
\{\gamma^{\phi_1}_2,\gamma^{\phi_1}_1,\gamma^{\phi_1}_4,
\gamma^{\phi_1}_3,\cdots\}
\end{align*}

Thus it can be seen that transformations of the symmetric measures are related to transformations of strong distributions on $(0, \infty)$
which possess special properties like symmetry.
In this note, we study this particular case of the transformation from the point of view of a perturbation of the chain sequences associated with polynomial sequences orthogonal on $(0,\infty)$ and some related consequences. We also provide an illustration using the Laguerre polynomials in the last section.
\section{Perturbed chain sequences}

The theory of chain sequences has been
used to study many properties of
a given orthogonal
polynomial system on the real line,
for instance,
its true interval of orthogonality,
which is the smallest closed interval that contains
all the zeros of all the polynomials of such a system.
Denoting the true interval of orthogonality of
$\{\mathcal{P}_n(x)\}$ satisfying
\eqref{eqn:three term recurrence relation for orthogonal}
as $[\xi, \eta]$
and
\begin{align*}
\omega_n(t)=
\dfrac{a_n^2}{(t-b_{n})(t-b_{n+1})},
\quad
n\geq1,
\end{align*}
the following are known to be equivalent,
\cite[Corollary 7.2.4]{Ismail_book},
\begin{enumerate}[(i)]
\item $[\xi,\eta]$ is contained in $(a,b)$,
\item $b_{n+1}\in(a,b)$ for $n\geq0$
      and both $\{\omega_n(a)\}$ and
      $\{\omega_n(b)\}$ are chain sequences.
\end{enumerate}
We note that here
$t\in (-\infty, \infty)\setminus\mathbb{\bar{E}}$.
In particular, the true interval of
orthogonality is a subset of $[0,\infty]$
if and only if
$b_{n+1}>0$ for $n\geq0$
and there are numbers $g_n$ such that
$0\leq g_0<1$, $0<g_n<1$, $n\geq1$,
satisfying,
\begin{align*}
(1-g_{n-1})g_n=
\dfrac{a_n^2}{b_{n}b_{n+1}}=\omega_{n}(0),
\quad n\geq1.
\end{align*}
As in \cite{Chihara_book},
the sequence
$\{g_n\}_{n=0}^{\infty}$
is constructed using
another sequence
$\{\gamma_n\}_{n=1}^{\infty}$,
where
$\gamma_1\geq0$ and $\gamma_n>0$, $n\geq2$.
For details, the reader is referred to
\cite[Chapter 1, Theorems 9.1, 9.2]{Chihara_book},
where it is shown that
\begin{align*}
b_{n+1}=\gamma_{2n+1}+\gamma_{2n+2},
\quad
n\geq0
\quad\mbox{and}\quad
a_n^2=\gamma_{2n}\gamma_{2n+1},
\quad n\geq1.
\end{align*}
and the parameters are given by $g_n=\gamma_{2n+1}/b_{n+1}$, $n\geq0$.
Hence when $\gamma_1=0$, we obtain the minimal parameter sequence $\{m_n\}_{n=0}^{\infty}$.

Associated with the chain sequence $\{\omega_n(0)\}$,
another sequence
$\{\tilde{\omega}_n(0)\}$
arises in a very natural way.
Defining
$\tilde{\omega}_1(0)=(1-k_0)k_1=\gamma_4/b_2$
and
\begin{align*}
\tilde{\omega}_{n}(0)=
\dfrac{\gamma_{2n-1}\gamma_{2n+2}}{b_{n}b_{n+1}}=
(1-k_{n-1})k_n,
\quad n\geq2,
\end{align*}
it can be seen that
$\{\tilde{\omega}_n\}_{n=1}^{\infty}$
becomes a chain sequence with the minimal
parameter sequence $\{k_n\}_{n=0}^{\infty}$
where $k_0=0$ and $k_n=1-g_n$, $n\geq1$.
Hence, we give the following definition
which was given earlier in
\cite{CCS_OPUC} with reference to orthogonal
polynomials on the unit circle.
\begin{definition}
Suppose $\{d_n\}_{n=1}^{\infty}$
is a chain sequence with
$\{m_n\}_{n=0}^{\infty}$
as its minimal parameter sequence.
Let $\{k_n\}_{n=0}^{\infty}$
be another sequence given by
$k_0=0$ and $k_n=1-m_n$ for $n\geq1$.
Then the chain sequence $\{a_n\}_{n=1}^{\infty}$
having
$\{k_n\}_{n=0}^{\infty}$
as its minimal parameter
sequence is called as
complementary chain sequence of
$\{d_n\}_{n=1}^{\infty}$.
\end{definition}
If $\gamma_1>0$,
a non-minimal parameter sequence
$\{g_n\}_{n=0}^{\infty}$
is obtained for the chain sequence
$\{\omega_n(0)\}_{n=1}^{\infty}$.
In this case,
the associated chain sequence
$\{\hat{\vartheta}_n(0)\}_{n=1}^{\infty}$,
is defined as
\begin{align*}
\hat{\vartheta}_n(0)=
(1-k_{n-1}')k_n'=
\dfrac{\gamma_{2n-1}\gamma_{2n+2}}{b_{n}b_{n+1}},
\quad n\geq1.
\end{align*}
where $k_n'=1-g_n$ for $n\geq0$.
\begin{definition}
Suppose $\{d_n\}_{n=1}^{\infty}$
is a chain sequence with
$\{g_n\}_{n=0}^{\infty}$
as its non-minimal parameter sequence.
Let $\{k_n'\}_{n=0}^{\infty}$
be another sequence given by
$k_n'=1-g_n$ for $n\geq0$.
Then the chain sequence
$\{a_n\}_{n=1}^{\infty}$
having
$\{k_n'\}_{n=0}^{\infty}$
as its parameter
sequence is called as
generalised complementary chain sequence
of
$\{d_n\}_{n=1}^{\infty}$.
\end{definition}
It may be noted from the above two definitions that
for a fixed chain sequence, while its
complementary chain sequence is unique,
its generalised complementary chain sequence
need not be unique.
In fact, a chain sequence
will have as many
generalised complementary chain sequences
as its non-minimal parameter sequences.
Naturally, the complementary chain sequence and all the generalised complementary chain sequences will coincide only for a SPPCS.

We would like to mention that the chain sequences
$\{\tilde{\omega}_{n}(0)\}_{n=1}^{\infty}$ and
$\{\hat{\vartheta}_n(0)\}_{n=1}^{\infty}$
have definite sources in the
theory of orthogonal polynomials
on the real line.
To see this, first note that
the symmetric polynomials $\{\mathcal{S}_n(x)\}$ satisfy the property
$\mathcal{S}_n(-x)=(-1)^{n}\mathcal{S}_n(x)$, $n\geq1$, which implies the existence of two OPS $\{\mathcal{P}_n(x)\}_{n=1}^{\infty}$ and $\{\mathcal{K}_n(x)\}_{n=1}^{\infty}$ such that
\begin{align*}
\mathcal{S}_{2n}(x)=\mathcal{P}_n(x^2),
\quad
\mathcal{S}_{2n+1}(x)=x\mathcal{K}_n(x^2).
\end{align*}
It is interesting to note that
$\{\mathcal{K}_n(x)\}=\{\mathcal{K}_n(0;x)\}$
is the sequence of kernel polynomial
corresponding to $\mathcal{P}_n(x)$
based at the origin and abbreviated as KOPS in the sequel.

Further, if the polynomials
$\{\mathcal{R}_n^{(i)}(x)\}$, $i=1,2$,
satisfy the recurrence relation,
\begin{align}
\label{eqn:general ttrr for polynomial and kernel}
\mathcal{R}_{n+1}^{(i)}(x)=
(x-b_{n+1}^{(i)})\mathcal{R}_{n}^{(i)}(x)-
(a_n^2)^{(i)}\mathcal{R}_{n-1}^{(i)}(x),
\quad n\geq0,
\end{align}
with $\mathcal{R}_{-1}^{(i)}(x)=0$
and
$\mathcal{R}_0^{(i)}(x)=1$
then,
\begin{enumerate}[(i)]
\item $\mathcal{R}_n^{(1)}(x)\equiv \mathcal{P}_n(x)$,
$n\geq1$, if and only if
%
\begin{align*}
b_1^{(1)}&=\gamma_2,
\quad
b_{n+1}^{(1)}=\gamma_{2n+1}+\gamma_{2n+2},
\quad n\geq1\\
(a_n^2)^{(1)}&=\gamma_{2n}\gamma_{2n+1},
\quad n\geq1,
\end{align*}
\item $\mathcal{R}_n^{(2)}(x)\equiv \mathcal{K}_n(x)$,
$n\geq1$, if and only if
%
\begin{align*}
b_{n+1}^{(2)}=\gamma_{2n+2}+\gamma_{2n+3},
\quad n\geq0
\quad\mbox{and}\quad
(a_n^2)^{(2)}=\gamma_{2n+1}\gamma_{2n+2},
\quad n\geq1.
\end{align*}
\end{enumerate}
With these notations, the parameter sequences can be denoted as
$m_n=\gamma_{2n+1}/b_{n+1}^{(1)}$ and $g_n=\gamma_{2n+1}/b_{n+1}^{(1)}$,
$n\geq0$.
Further, denoting
$\tilde{a}_n^2=\gamma_{2n-1}\gamma_{2n+2}$,
$n\geq1$,
the following theorem shows that the polynomials
$\{\tilde{\mathcal{P}}_n(x)\}$ and
$\{\hat{\mathcal{P}}_n(x)\}$
associated respectively,
with the complementary chain sequence
$\{\tilde{\omega}_n(0)\}$
and the generalised complementary chain sequence
$\{\hat{\vartheta}_n(0)\}$,
can be attributed to a particular perturbation of
the recurrence coefficients of the polynomials
$\{S_n(x)\}$.
\begin{remark}
  Such perturbations of the recurrence coefficients as well
  as of the Stieltjes function have been studied deeply.
  The reader is referred to
  \textup{\cite{castillo-co-polynomials-JMAA-2015,
  garza-paco-szegotrans-asso-poly-2009,
  Zhedanov-rat-spectral-JCAM-1997}}
  for some details.
  In most of the cases only a single modification or a
  finite composition of modifications is considered.
  In this note however, all the recurrence coefficients are
  perturbed.
\end{remark}
\begin{theorem}
\label{thm:main theorem Pn Kn for CCS case}
Let the symmetric polynomials $\{\tilde{S}_n(x)\}_{n=0}^{\infty}$
satisfy
\begin{align}
\tilde{S}_{n}(x)=x\tilde{S}_{n-1}(x)-\tilde{\nu}_n\tilde{S}_{n-2}(x),
\quad n\geq1,
\label{eqn: three term recurrence relation for Sn}
\end{align}
with
$\tilde{S}_{-1}(x)=0$, $\tilde{S}_{0}(x)=1$
and where, for $n\geq1$,
\begin{align}
\label{eqn:perturbation in recurrence coeff for symmetric poly}
\tilde{\nu}_n=
\left\{
  \begin{array}{ll}
    \gamma_{2j-1}, & \hbox{n=2j,\,\, j=1,2,$\cdots$} \\
    \gamma_{2j+2}, & \hbox{n=2j+1,\, j=0,1,$\cdots$.}
  \end{array}
\right.
\end{align}
Then,
with $\gamma_1\neq0$,
$\{\tilde{\mathcal{P}}_n(x)\}_{n=0}^{\infty}$,
where
$\tilde{S}_{2n}(x)=\tilde{\mathcal{P}}_n(x^2)$,
satisfy,
\begin{align}
\label{eqn:ttrr for oprl obtained from CCS}
\tilde{\mathcal{P}}_{n+1}(x)=
(x-b_{n+1}^{(1)})\tilde{\mathcal{P}}_{n}(x)-
\tilde{a}_n^2\tilde{\mathcal{P}}_{n-1}(x),
\quad n\geq1,
\end{align}
with the initial conditions
$\tilde{\mathcal{P}}_{0}(x)=1$ and
$\tilde{\mathcal{P}}_{1}(x)=(x-\gamma_1)$.
\end{theorem}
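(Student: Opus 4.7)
The plan is to derive the even-indexed subsequence recurrence from the symmetric three-term recurrence \eqref{eqn: three term recurrence relation for Sn}, using the same folding trick that produces the standard splitting $\mathcal{S}_{2n}(x)=\mathcal{P}_n(x^2)$ for symmetric orthogonal polynomials. First I would observe that the recurrence for $\tilde S_n$ with $\tilde S_0=1$ and $\tilde S_1=x$ (obtained at $n=1$ since $\tilde S_{-1}=0$) preserves the parity $\tilde S_n(-x)=(-1)^n\tilde S_n(x)$ by an obvious induction, so $\tilde S_{2n}$ is an even polynomial and the definition $\tilde S_{2n}(x)=\tilde{\mathcal P}_n(x^2)$ is legitimate. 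The initial values $\tilde{\mathcal P}_0(x)=1$ and $\tilde{\mathcal P}_1(x)=x-\gamma_1$ then follow from $\tilde S_2(x)=x\tilde S_1(x)-\tilde\nu_2\tilde S_0(x)=x^2-\gamma_1$, since $\tilde\nu_2=\gamma_1$.

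Next I would iterate \eqref{eqn: three term recurrence relation for Sn} three times to collapse the odd-indexed terms. Writing
\begin{align*}
\tilde S_{2n+2} &= x\tilde S_{2n+1}-\tilde\nu_{2n+2}\tilde S_{2n},\\
\tilde S_{2n+1} &= x\tilde S_{2n}-\tilde\nu_{2n+1}\tilde S_{2n-1},\\
\tilde S_{2n}   &= x\tilde S_{2n-1}-\tilde\nu_{2n}\tilde S_{2n-2},
\end{align*}
the third identity gives $x\tilde S_{2n-1}=\tilde S_{2n}+\tilde\nu_{2n}\tilde S_{2n-2}$. Substituting in the middle line and then into the first, the factors of $x$ combine as $x\cdot x\tilde S_{2n}=x^2\tilde S_{2n}$ and the remaining terms are expressible purely via $\tilde S_{2n}$ and $\tilde S_{2n-2}$, yielding
\begin{align*}
\tilde S_{2n+2}(x)=\bigl[x^2-(\tilde\nu_{2n+1}+\tilde\nu_{2n+2})\bigr]\tilde S_{2n}(x)-\tilde\nu_{2n}\tilde\nu_{2n+1}\,\tilde S_{2n-2}(x).
\end{align*}

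The last step is pure bookkeeping with \eqref{eqn:perturbation in recurrence coeff for symmetric poly}. Reading the parity-dependent formula for $\tilde\nu_n$ I would record that $\tilde\nu_{2n+1}=\gamma_{2n+2}$, $\tilde\nu_{2n+2}=\gamma_{2n+1}$ and $\tilde\nu_{2n}=\gamma_{2n-1}$, so that
\begin{align*}
\tilde\nu_{2n+1}+\tilde\nu_{2n+2}=\gamma_{2n+1}+\gamma_{2n+2}=b_{n+1}^{(1)},\qquad \tilde\nu_{2n}\tilde\nu_{2n+1}=\gamma_{2n-1}\gamma_{2n+2}=\tilde a_n^{2},
\end{align*}
for $n\geq 1$. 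Substituting $y=x^2$ and invoking $\tilde S_{2k}(x)=\tilde{\mathcal P}_k(y)$ converts the displayed identity into \eqref{eqn:ttrr for oprl obtained from CCS}, completing the argument.

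There is no serious obstacle: the derivation is purely algebraic and the only place one has to be careful is the index gymnastics in the definition of $\tilde\nu_n$, where the odd/even swap pattern $\gamma_2,\gamma_1,\gamma_4,\gamma_3,\ldots$ must line up correctly with $b_{n+1}^{(1)}$ and $\tilde a_n^{2}$. I would confirm the $n=1$ case by hand (computing $\tilde S_4(x)$ directly and comparing with $(x-b_2^{(1)})\tilde{\mathcal P}_1(x)-\tilde a_1^{2}\tilde{\mathcal P}_0(x)=(x-\gamma_3-\gamma_4)(x-\gamma_1)-\gamma_1\gamma_4$) to rule out any off-by-one in the index conventions before stating the general result.
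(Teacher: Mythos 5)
Your proof is correct: the parity induction justifies writing $\tilde{S}_{2n}(x)=\tilde{\mathcal{P}}_n(x^2)$, the three-line collapse of \eqref{eqn: three term recurrence relation for Sn} is valid, the bookkeeping $\tilde{\nu}_{2n}=\gamma_{2n-1}$, $\tilde{\nu}_{2n+1}=\gamma_{2n+2}$, $\tilde{\nu}_{2n+2}=\gamma_{2n+1}$ indeed produces $\tilde{\nu}_{2n+1}+\tilde{\nu}_{2n+2}=b_{n+1}^{(1)}$ and $\tilde{\nu}_{2n}\tilde{\nu}_{2n+1}=\tilde{a}_n^{2}$ for $n\geq1$, and the initial values $\tilde{\mathcal{P}}_0=1$, $\tilde{\mathcal{P}}_1(x)=x-\gamma_1$ come out as you say. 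The route is, however, organised differently from the paper's. There, the odd subsequence is also named, $\tilde{S}_{2m+1}(x)=x\tilde{\mathcal{K}}_m(x^2)$, and the even/odd steps of the recurrence are first recast as the coupled first-order relations \eqref{eqn: recurrence relation for Pn in terms of Qn} and \eqref{eqn: recurrence relation for Qn in terms of Pn}; eliminating $\tilde{\mathcal{K}}_m$ then gives \eqref{eqn:ttrr for oprl obtained from CCS}, while eliminating $\tilde{\mathcal{P}}_m$ gives the companion recurrence \eqref{eqn: recurrence relation for kernel polynomials in ccs} for the kernel-type polynomials. Your direct telescoping over two even steps is more economical and avoids the auxiliary family altogether, but it yields only the $\tilde{\mathcal{P}}_n$ recurrence; the paper's detour buys, at no extra cost, the recurrence for $\tilde{\mathcal{K}}_n$, which is what is actually used afterwards (Corollary \ref{cor: invariance of kernel polynomials in ccs} and the unified theorem), so with your approach one would have to run the analogous elimination on the odd subsequence separately. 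One harmless point both you and the paper leave implicit: the identity obtained in the variable $x^2$ transfers to an identity in $y$ because the two sides are polynomials agreeing at infinitely many points $y=x^{2}\geq0$. Your suggested hand-check of the $n=1$ case is consistent: $\tilde{S}_4(x)=x^4-(\gamma_1+\gamma_3+\gamma_4)x^2+\gamma_1\gamma_3$ matches $(y-\gamma_3-\gamma_4)(y-\gamma_1)-\gamma_1\gamma_4$ at $y=x^2$.
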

\begin{proof}
First note that, the perturbation
\eqref{eqn:perturbation in recurrence coeff for symmetric poly}
implies that the sequence of coefficients
$\{\gamma_1,\gamma_2,\gamma_3,\gamma_4,\cdots\}$
is replaced by
$\{\gamma_2,\gamma_1,\gamma_4,\gamma_3,\cdots\}$. That is,
$\{\gamma_{2k-1},\gamma_{2k}\}$ are pair-wise interchanged to
$\{\gamma_{2k},\gamma_{2k-1}\}$, $k\geq1$.
Then, for $n=2m$,
\begin{align*}
\tilde{S}_{2m}(x)=x\tilde{S}_{2m-1}(x)-\gamma_{2m-1}\tilde{S}_{2m-2}(x),
\quad m\geq1
\end{align*}
which implies,
\begin{align*}
\tilde{\mathcal{P}}_{m}(x^2)=x^2\tilde{\mathcal{K}}_{m-1}(x^2)-
\gamma_{2m-1}\tilde{\mathcal{P}}_{m-1}(x^2),
\end{align*}
or equivalently,
\begin{align}
\tilde{\mathcal{P}}_{m}(x)=
x\tilde{\mathcal{K}}_{m-1}(x)-
\gamma_{2m-1}\tilde{\mathcal{P}}_{m-1}(x),
\quad m\geq1.
\label{eqn: recurrence relation for Pn in terms of Qn}
\end{align}
Similarly, for $n=2m+1$,
\begin{align*}
\tilde{S}_{2m+1}(x)=
x\tilde{S}_{2m}(x)-\gamma_{2m+2}\tilde{S}_{2m-1}(x),
\quad m\geq0,
\end{align*}
which implies,
\begin{align*}
x\tilde{\mathcal{K}}_{m}(x^2)=
x\tilde{\mathcal{P}}_{m}(x^2)-
\gamma_{2m+2}x\tilde{\mathcal{K}}_{m-1}(x^2),
\end{align*}
or equivalently,
\begin{align}
\tilde{\mathcal{K}}_{m}(x)=
\tilde{\mathcal{P}}_{m}(x)-
\gamma_{2m+2}\tilde{\mathcal{K}}_{m-1}(x),
\quad m\geq0.
\label{eqn: recurrence relation for Qn in terms of Pn}
\end{align}
Using
\eqref{eqn: recurrence relation for Pn in terms of Qn}
and
\eqref{eqn: recurrence relation for Qn in terms of Pn},
it is easy to find the three term recurrence relations for
$\tilde{\mathcal{P}}_n(x)$ and $\tilde{\mathcal{K}}_n(x)$.
For this, first
$\tilde{\mathcal{P}}_n(x)$
is eliminated. From
\eqref{eqn: recurrence relation for Qn in terms of Pn},
it can be seen that,
\begin{align*}
\tilde{\mathcal{P}}_{m}(x)&=
\tilde{\mathcal{K}}_{m}(x)+\gamma_{2m+2}\tilde{\mathcal{K}}_{m-1}(x),
\quad m\geq0.
\end{align*}
Using this in
\eqref{eqn: recurrence relation for Pn in terms of Qn},
gives,
%
%
\begin{align}
\tilde{\mathcal{K}}_{m}(x)=
[x-(\gamma_{2m-1}+\gamma_{2m+2})]\tilde{\mathcal{K}}_{m-1}(x)-
\gamma_{2m-1}\gamma_{2m}\tilde{\mathcal{K}}_{m-2}(x),
\quad m\geq1.
\label{eqn: recurrence relation for kernel polynomials in ccs}
\end{align}
with
$\tilde{\mathcal{K}}_{-1}(x)=0$
and (using
\eqref{eqn: recurrence relation for Qn in terms of Pn})
$\tilde{\mathcal{K}}_0(x)=1$.
Similarly,
\eqref{eqn: recurrence relation for Pn in terms of Qn}
gives
\begin{align*}
x\tilde{\mathcal{K}}_{m-1}(x)&=\tilde{\mathcal{P}}_{m}(x)+\gamma_{2m-1}\tilde{\mathcal{P}}_{m-1}(x),\\
\end{align*}
Using this in
\eqref{eqn: recurrence relation for Qn in terms of Pn}
yields,
%
\begin{align}
\label{eqn: recurrence relation for polynomials in ccs}
\tilde{\mathcal{P}}_{m+1}(x)=[x-(\gamma_{2m+1}+\gamma_{2m+2})]\tilde{\mathcal{P}}_{m}(x)-
\gamma_{2m-1}\gamma_{2m+2}\tilde{\mathcal{P}}_{m-1}(x),
\quad m\geq1,
\end{align}
%
%
%
with the initial conditions
$\tilde{\mathcal{P}}_0(x)=1$ and
(using
\eqref{eqn: recurrence relation for Pn in terms of Qn})
$\tilde{\mathcal{P}}_1(x)=x-\gamma_1$,
thus proving the theorem.
\end{proof}
\begin{corollary}
  Consider the OPS $\{\hat{P}_{n}(x)\}_{n=0}^{\infty}$ satisfying
  \eqref{eqn: recurrence relation for polynomials in ccs}
  but for $m\geq0$. Then $\{\hat{P}_{n}(x)\}_{n=0}^{\infty}$ is associated with the generalised complementary chain sequence $\{\hat{\vartheta}_n(0)\}_{n=1}^{\infty}$.
\end{corollary}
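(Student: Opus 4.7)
The plan is to parallel the proof of Theorem~\ref{thm:main theorem Pn Kn for CCS case} almost verbatim, making only one change: extending the recurrence \eqref{eqn: recurrence relation for polynomials in ccs} to the value $m=0$ pins down a different initial datum, which is precisely what carries us from the minimal parameter case to the non-minimal one.

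First I would set $\hat{P}_{-1}(x)=0$ and $\hat{P}_{0}(x)=1$, then specialise \eqref{eqn: recurrence relation for polynomials in ccs} at $m=0$ to read off $\hat{P}_{1}(x)=x-(\gamma_1+\gamma_2)$. This is the sole but essential difference from Theorem~\ref{thm:main theorem Pn Kn for CCS case}, where $\tilde{\mathcal{P}}_1(x)=x-\gamma_1$ is imposed separately to reflect $\gamma_1=0$; the present extension yields instead $\hat{b}_1=\gamma_1+\gamma_2=b_1^{(1)}$, which is exactly the first recurrence coefficient of $\{\mathcal{P}_n\}$ in the non-minimal case $\gamma_1>0$. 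Reading off the remaining coefficients gives
\[
\hat{b}_{m+1}=\gamma_{2m+1}+\gamma_{2m+2}=b_{m+1}^{(1)},\quad m\geq 0,\qquad \hat{a}_{m}^{\,2}=\gamma_{2m-1}\gamma_{2m+2},\quad m\geq 1,
\]
so the chain sequence naturally attached to $\{\hat{P}_n\}$ and evaluated at the origin becomes
\[
\frac{\hat{a}_{m}^{\,2}}{\hat{b}_{m}\,\hat{b}_{m+1}}
=\frac{\gamma_{2m-1}\gamma_{2m+2}}{b_{m}\,b_{m+1}},\quad m\geq 1,
\]
which is $\hat{\vartheta}_m(0)$ by the definition given just before the corollary.

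To finish, I would verify that $\{k_n'\}_{n\geq 0}$ with $k_n'=1-g_n$ is indeed a (non-minimal) parameter sequence of this chain sequence. Using $g_n=\gamma_{2n+1}/b_{n+1}$ and hence $1-g_n=\gamma_{2n+2}/b_{n+1}$, the product $(1-k_{n-1}')k_n'=g_{n-1}(1-g_n)$ collapses at once to $\gamma_{2n-1}\gamma_{2n+2}/(b_{n}b_{n+1})=\hat{\vartheta}_n(0)$, as required. The main delicate point in the whole argument is the bookkeeping at the index $n=1$: it is only because we allow $m=0$ in \eqref{eqn: recurrence relation for polynomials in ccs} that $\hat{b}_1$ comes out as $\gamma_1+\gamma_2$ rather than $\gamma_1$, and this single adjustment is what distinguishes the generalised complementary chain sequence $\{\hat{\vartheta}_n(0)\}$ from its minimal-case counterpart $\{\tilde{\omega}_n(0)\}$ appearing in the theorem.
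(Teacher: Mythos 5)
Your proposal is correct and follows essentially the same route as the paper: read off the recurrence coefficients $\hat{b}_{m+1}=\gamma_{2m+1}+\gamma_{2m+2}$ and $\hat{a}_m^2=\gamma_{2m-1}\gamma_{2m+2}$ (with the $m=0$ case fixing $\hat{P}_1(x)=x-(\gamma_1+\gamma_2)$), form the chain sequence $\hat{a}_m^2/(\hat{b}_m\hat{b}_{m+1})=\gamma_{2m-1}\gamma_{2m+2}/(b_m b_{m+1})$, and check that $k_n'=1-g_n$ parametrises it via $(1-k_{n-1}')k_n'=g_{n-1}(1-g_n)$. Your explicit verification of the parameter identity is, if anything, slightly more careful than the paper's one-line conclusion, and your identification $b_1=\gamma_1+\gamma_2$ matches the paper's own usage in this non-minimal setting.
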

\begin{proof}
  From the recurrence relation
  \begin{align}
  \label{eqn: three term recurrence relation for GCCS polynomials}
    \hat{\mathcal{P}}_{m+1}(x)=[x-(\gamma_{2m+1}+\gamma_{2m+2})]\hat{\mathcal{P}}_{m}(x)-
    \gamma_{2m-1}\gamma_{2m+2}\hat{\mathcal{P}}_{m-1}(x),
    \quad m\geq0,
  \end{align}
  with $\hat{P}_{-1}(x)=0$ and $\hat{P}_{0}(x)=1$, the chain sequence is given by
  \begin{align*}
   \left\{\dfrac{\gamma_{2n-1}\gamma_{2n+2}}
           {(\gamma_{2n-1}+\gamma_{2n})
           (\gamma_{2n+1}+\gamma_{2n+2})}\right\}_{n=1}^{\infty}
  \end{align*}
  with the parameter sequence
  $\{k'_n\}_{n=0}^{\infty}$=
  $\{\gamma_{2n+2}/(\gamma_{2n+1}\gamma_{2n+2})\}_{n=0}^{\infty}$.
  The result now follows since
  $k'_n=1-g_n$, $n\geq0$.
\end{proof}
The OPS $\{\tilde{\mathcal{P}}_n(x)\}_{n=1}^{\infty}$ can be seen to be
co-recursive with respect to the OPS $\{\hat{\mathcal{P}}_n(x)\}_{n=1}^{\infty}$
arising from the initial conditions
$\tilde{\mathcal{P}}_0(x)=1$ and
$\tilde{\mathcal{P}}_1(x)=\hat{\mathcal{P}}_1(x)+\gamma_2$.
The co-recursive polynomials have been investigated
in the past; see for example,
\cite{Chihara-co-recursive}, and later
\cite{paco-pert-rec-rel-JCAM-1990}
in which the structure and spectrum of the
generalised co-recursive polynomials have been
studied.

Further, from \eqref{eqn: recurrence relation for polynomials in ccs},
the associated chain sequence is $\{\tilde{a}_n^{2}/b_n^{(1)}b_{n+1}^{(1)}\}_{n=1}^{\infty}$
with the first few terms as
\begin{align*}
\dfrac{\tilde{a}_1^2}{b_1^{(1)}b_2^{(1)}}&=
\dfrac{\gamma_4}{(\gamma_3+\gamma_4)}=(1-k_0)k_1;\quad
\dfrac{\tilde{a}_2^2}{b_2^{(1)}b_3^{(1)}}=
\dfrac{\gamma_3\gamma_6}{(\gamma_3+\gamma_4)(\gamma_5+\gamma_6)}=(1-k_1)k_2\\
\dfrac{\tilde{a}_3^2}{b_3^{(1)}b_4^{(1)}}&=
\dfrac{\gamma_5\gamma_8}{(\gamma_5+\gamma_6)(\gamma_7+\gamma_8)}=(1-k_2)k_3
\end{align*}
Proceeding as above, we obtain the minimal parameter sequence $\{k_n\}_{n=0}^{\infty}$ where
$k_0=0$ and
$k_n=\gamma_{2n+2}/b_{n+1}^{(1)}=1-g_n$, $n\geq1$.
which shows that the OPS $\{\tilde{P}_n(x)\}_{n=0}^{\infty}$
is associated with the complementary chain sequence $\tilde{\omega}_n(0)\}_{n=1}^{\infty}$.

Viewing the generalised complementary chain sequences
as perturbations of the
minimal parameters or simply a transformation of the
original chain sequence, we give an important consequence of
Theorem $\ref{thm:main theorem Pn Kn for CCS case}$.
\begin{corollary}
\label{cor: invariance of kernel polynomials in ccs}
The kernel polynomial system $\{\mathcal{K}_n(x)\}$
remains invariant under
generalised complementary chain sequence
if the sequence $\{\gamma_n\}_{n=1}^{\infty}$
satisfies,
\begin{align*}
\gamma_{2n+1}-\gamma_{2n-1}=\gamma_{2n+2}-\gamma_{2n},
\quad n\geq1.
\end{align*}
\end{corollary}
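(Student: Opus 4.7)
My proof plan is to read off the kernel polynomial recurrence coefficients directly in terms of the sequence $\{\gamma_n\}$ both before and after the perturbation, and then compare them term by term. The main observation I am aiming for is that the off-diagonal coefficients are invariant under the generalised complementary chain sequence transformation \emph{without any hypothesis}, so only the diagonal coefficients can obstruct invariance, and equating them yields exactly the stated condition.

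Recall that from the characterisation stated in the preliminaries, $\{\mathcal{K}_n(x)\}$ is the monic OPS determined by $b_{n+1}^{(2)}=\gamma_{2n+2}+\gamma_{2n+3}$ and $(a_n^2)^{(2)}=\gamma_{2n+1}\gamma_{2n+2}$. The perturbation underlying the complementary and generalised complementary chain sequence transformations is spelled out in \eqref{eqn:perturbation in recurrence coeff for symmetric poly} and amounts to the pair-wise exchange $(\gamma_{2k-1},\gamma_{2k})\leftrightarrow(\gamma_{2k},\gamma_{2k-1})$ for each $k\geq 1$. Writing the perturbed sequence as $\tilde{\gamma}_{2k-1}=\gamma_{2k}$ and $\tilde{\gamma}_{2k}=\gamma_{2k-1}$ and applying the same structural formula, I would obtain the kernel polynomial recurrence coefficients of the perturbed OPS as
\begin{align*}
\tilde{b}_{n+1}^{(2)}&=\tilde{\gamma}_{2n+2}+\tilde{\gamma}_{2n+3}=\gamma_{2n+1}+\gamma_{2n+4},\\
(\tilde{a}_n^2)^{(2)}&=\tilde{\gamma}_{2n+1}\tilde{\gamma}_{2n+2}=\gamma_{2n+2}\gamma_{2n+1}.
\end{align*}
This is consistent with the three-term recurrence \eqref{eqn: recurrence relation for kernel polynomials in ccs} already derived in the proof of Theorem \ref{thm:main theorem Pn Kn for CCS case}, modulo an index shift.

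The crucial observation is that $(\tilde{a}_n^2)^{(2)}=(a_n^2)^{(2)}=\gamma_{2n+1}\gamma_{2n+2}$ holds unconditionally, because the two factors belong to the same swap-pair $k=n+1$ and a product is symmetric in its factors. By Favard's theorem the monic OPS is uniquely determined by its three-term recurrence, so the invariance $\{\tilde{\mathcal{K}}_n\}=\{\mathcal{K}_n\}$ is equivalent to the single family of equations $\tilde{b}_{n+1}^{(2)}=b_{n+1}^{(2)}$, i.e.,
\begin{align*}
\gamma_{2n+1}+\gamma_{2n+4}=\gamma_{2n+2}+\gamma_{2n+3},\quad n\geq 0.
\end{align*}
Rearranging this as $\gamma_{2n+3}-\gamma_{2n+1}=\gamma_{2n+4}-\gamma_{2n+2}$ and reindexing with $m=n+1\geq 1$ produces precisely the condition $\gamma_{2m+1}-\gamma_{2m-1}=\gamma_{2m+2}-\gamma_{2m}$ stated in the corollary.

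I do not foresee any substantial technical obstacle; the only care required is in the index bookkeeping when identifying which $\gamma$'s belong to the same swap-pair. The conceptual point driving the result is that in $b_{n+1}^{(2)}$ the two summands $\gamma_{2n+2}$ and $\gamma_{2n+3}$ belong to \emph{different} swap-pairs (pairs indexed $k=n+1$ and $k=n+2$), whereas in $(a_n^2)^{(2)}$ the two factors lie in the \emph{same} swap-pair, so invariance of the sub-diagonal is automatic while invariance of the diagonal imposes a genuine constraint, recovering exactly the one in the statement.
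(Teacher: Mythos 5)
Your proposal is correct and follows essentially the same route as the paper: the paper's proof is precisely a comparison of the recurrence \eqref{eqn: recurrence relation for kernel polynomials in ccs} for $\tilde{\mathcal{K}}_n$ with the coefficients $b_{n+1}^{(2)}=\gamma_{2n+2}+\gamma_{2n+3}$ and $(a_n^2)^{(2)}=\gamma_{2n+1}\gamma_{2n+2}$, and your index bookkeeping (sub-diagonal invariant automatically since both factors lie in one swap-pair, diagonal equality forcing $\gamma_{2n+1}-\gamma_{2n-1}=\gamma_{2n+2}-\gamma_{2n}$) reproduces exactly that comparison. Your added appeal to Favard/uniqueness of the monic OPS to pass from equal recurrence coefficients to equal polynomial systems is a harmless elaboration of what the paper leaves implicit.
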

\begin{proof}
The proof follows from a comparison of
\eqref{eqn: recurrence relation for kernel polynomials in ccs}
and the expressions for $b_{n+1}^{(2)}$ and $a_n^{(2)}$.
\end{proof}
Corollary
\ref{cor: invariance of kernel polynomials in ccs}
is important because it is known
\cite[Ex. 7.2, p. 39]{Chihara_book},
that the relation between the monic orthogonal polynomials
and the kernel polynomials is not unique. That is,
for fixed $t\in\mathbb{R}$, though $\{\mathcal{P}_n(x)\}$
will lead to a unique kernel polynomial
system $\{\mathcal{K}_n(t;x)\}$, there are infinite number of
other monic orthogonal polynomial systems which has the same
$\{\mathcal{K}_n(t;x)\}$ as their kernel polynomial system.
Hence generalised complementary chain sequences
can be used to construct
two orthogonal polynomials systems having the same
kernel polynomial systems.

The following theorem unifies the recurrence relations for
the polynomials and the associated kernel polynomials
for both the chain sequence as well as its
generalised complementary
chain sequence.
\begin{theorem}
Consider the recurrence relation,
\begin{align*}
\mathcal{T}_{n}(x)=
(x-\xi_n)\mathcal{T}_{n-1}(x)-
\eta_n\mathcal{T}_{n-2}(x),
\quad n\geq1,
\end{align*}
with $\mathcal{T}_{-1}(x)=0$ and
$\mathcal{T}_{0}(x)=1$.
Then,
\begin{enumerate}[(i)]
\item $\mathcal{T}_{n}(x)\equiv\tilde{\mathcal{P}}_{n}(x)
       (\equiv\hat{\mathcal{P}}_{n}(x))$,
      $n\geq1$, if and only if,
       \begin{align*}
       \xi_1&=\gamma_1(=\gamma_1+\gamma_2),
       \quad
       \xi_{n+1}=b_{n+1}^{(1)},
       \quad n\geq1,
       \quad\mbox{and}\\
       \eta_2&=\gamma_1\gamma_4,
       \quad
       \eta_{n+1}=
       \dfrac{(a_{n-1}^2)^{(2)}(a_n^2)^{(2)}}
             {(a_n^2)^{(1)}},
       \quad n\geq2.
       \end{align*}
\item $\mathcal{T}_{n}(x)\equiv
      \tilde{\mathcal{K}}_{n}(x)$,
      $n\geq1$, if and only if,
       \begin{align*}
       \xi_1&=\gamma_1+\gamma_4,
       \quad
       \xi_{n+1}=b_{n+1}^{(1)}+
       b_{n+2}^{(1)}-b_{n+1}^{(2)},
       \quad n\geq1,
       \quad\mbox{and}\\
       \eta_1&=\gamma_1\gamma_2,
       \quad
       \eta_{n+1}=(a_n^2)^{(2)},
       \quad n\geq1.
       \end{align*}
\end{enumerate}
\end{theorem}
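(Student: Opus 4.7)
The plan is to prove both parts by direct verification: match the proposed unified recurrence against the already-established recurrences \eqref{eqn: recurrence relation for polynomials in ccs} for $\tilde{\mathcal{P}}_n$ (and \eqref{eqn: three term recurrence relation for GCCS polynomials} for $\hat{\mathcal{P}}_n$) and \eqref{eqn: recurrence relation for kernel polynomials in ccs} for $\tilde{\mathcal{K}}_n$, and then rewrite the resulting $\gamma$-coefficients in terms of the $b_{n+1}^{(i)}$ and $(a_n^2)^{(i)}$ quantities that label the theorem. Since each of those recurrences is three-term, together with its initial data it uniquely determines the polynomial sequence; hence each ``if and only if'' reduces to matching $\xi_n,\eta_n$ coefficient by coefficient, starting at $n=1$.

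For part (i), I would first read off from $\tilde{\mathcal{P}}_1(x)=x-\gamma_1$ that the degree-one case forces $\xi_1=\gamma_1$, while the $\hat{\mathcal{P}}$ variant gives instead $\xi_1=\gamma_1+\gamma_2$ (since \eqref{eqn: three term recurrence relation for GCCS polynomials} holds from $m\geq 0$ and $\hat{\mathcal{P}}_{-1}\equiv 0$), which explains the parenthetical alternative. For $n\geq 2$ the identification $\xi_{n+1}=\gamma_{2n+1}+\gamma_{2n+2}=b_{n+1}^{(1)}$ is immediate from the definition of $b_{n+1}^{(1)}$. The $\eta$-side requires one small algebraic step: from \eqref{eqn: recurrence relation for polynomials in ccs} we read $\eta_{n+1}=\gamma_{2n-1}\gamma_{2n+2}$ for $n\geq 1$, and then
\[
\frac{(a_{n-1}^2)^{(2)}(a_n^2)^{(2)}}{(a_n^2)^{(1)}}
=\frac{(\gamma_{2n-1}\gamma_{2n})(\gamma_{2n+1}\gamma_{2n+2})}{\gamma_{2n}\gamma_{2n+1}}
=\gamma_{2n-1}\gamma_{2n+2},
\]
valid for $n\geq 2$; the $n=1$ case sits outside this pattern because $(a_0^2)^{(2)}$ is not defined, which is precisely why $\eta_2=\gamma_1\gamma_4$ must be stated separately.

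For part (ii), \eqref{eqn: recurrence relation for kernel polynomials in ccs} immediately yields $\xi_m=\gamma_{2m-1}+\gamma_{2m+2}$ and $\eta_m=\gamma_{2m-1}\gamma_{2m}$ for $m\geq 1$, together with $\tilde{\mathcal{K}}_1(x)=x-(\gamma_1+\gamma_4)$, handling the $n=1$ initial data and the separate $\eta_1=\gamma_1\gamma_2$ clause. The identification $\eta_{n+1}=(a_n^2)^{(2)}=\gamma_{2n+1}\gamma_{2n+2}$ is then a direct substitution. The only nontrivial rewriting is the $\xi_{n+1}$ formula, which I would verify by a short cancellation:
\[
b_{n+1}^{(1)}+b_{n+2}^{(1)}-b_{n+1}^{(2)}
=(\gamma_{2n+1}+\gamma_{2n+2})+(\gamma_{2n+3}+\gamma_{2n+4})-(\gamma_{2n+2}+\gamma_{2n+3})
=\gamma_{2n+1}+\gamma_{2n+4},
\]
which matches $\gamma_{2(n+1)-1}+\gamma_{2(n+1)+2}$.

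No genuine obstacle is expected: both directions of each ``if and only if'' follow from uniqueness of the sequence determined by a three-term recurrence with given initial values. The only point needing care is bookkeeping of initial indices, which forces the small-index exceptions $\xi_1$, $\eta_1$, and $\eta_2$ to be listed explicitly rather than folded into the general formulas involving the $b_{n+1}^{(i)}$ and $(a_n^2)^{(i)}$.
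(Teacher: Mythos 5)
Your verification is correct and matches what the paper intends: the theorem is stated as a direct unification of the already-derived recurrences \eqref{eqn: recurrence relation for polynomials in ccs}, \eqref{eqn: three term recurrence relation for GCCS polynomials} and \eqref{eqn: recurrence relation for kernel polynomials in ccs}, so the proof is exactly the coefficient-matching and $\gamma$-rewriting you carry out (the paper leaves it implicit). Your index bookkeeping, the cancellation $b_{n+1}^{(1)}+b_{n+2}^{(1)}-b_{n+1}^{(2)}=\gamma_{2n+1}+\gamma_{2n+4}$, and the explanation of the separately listed small-index values $\xi_1$, $\eta_1$, $\eta_2$ are all accurate.
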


Let the zeros of $\{\mathcal{P}_n(x)\}_{n=0}^{\infty}$
and $\{\tilde{\mathcal{P}}_n(x)\}_{n=0}^{\infty}$
be denoted as
\begin{align*}
0<x_{n,1}<x_{n,2}<\cdots<x_{n,n-1}<x_{n,n}
\quad\mbox{and}\quad 0<\tilde{x}_{n,1}<\tilde{x}_{n,2}<\cdots<\tilde{x}_{n,n-1}<\tilde{x}_{n,n}
\end{align*}
For fixed $n$, by interlacing of zeros of
$\mathcal{P}_n(x)$ and $\tilde{\mathcal{P}}_n(x)$ it is understood that
$x_{n,j}$ are mutually separated by $\tilde{x}_{n,j}$ for $j=1,2,\cdots,n$.
Further, in the present case, it is interesting to note from
\eqref{eqn:general ttrr for polynomial and kernel} and
\eqref{eqn: recurrence relation for polynomials in ccs},
that the sum of the roots of $\mathcal{P}_n(x)$ is given by $\gamma_2+\gamma_3+\cdots+\gamma_{2n}$ while that for $\tilde{\mathcal{P}}_n(x)$ is $\gamma_1+\gamma_3+\cdots+\gamma_{2n}$.
\begin{remark}
It is clear that if $\gamma_1=\gamma_2$, interlacing of the zeros of
$\{\mathcal{P}_n(x)\}$ and $\{\tilde{\mathcal{P}}_n(x)\}$ can never occur.
\end{remark}
For $\gamma_1\neq\gamma_2$, we have the following result.
\begin{proposition}
For fixed $n$, the zeros $\{x_{n,j}\}_{j=1}^{n}$
and $\{\tilde{x}_{n,j}\}_{j=1}^{n}$  cannot interlace if
$(\gamma_1-\gamma_2)$ and $(x_{n,j}-\tilde{x}_{n,j})$ have the same sign
for some $j=1,2,\cdots,n$.
\end{proposition}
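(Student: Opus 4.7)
My plan is to argue by contrapositive, using only the formulas for the sums of the zeros that are already recorded just before the statement. The observation is that the sum of the zeros of $\mathcal{P}_n(x)$ minus the sum of the zeros of $\tilde{\mathcal{P}}_n(x)$ collapses telescopically to $\gamma_2-\gamma_1$, because all pairs $(\gamma_{2k-1},\gamma_{2k})$ with $k\geq 2$ contribute identically to both. Thus
\begin{align*}
\sum_{j=1}^{n}\bigl(x_{n,j}-\tilde{x}_{n,j}\bigr)=\gamma_2-\gamma_1=-(\gamma_1-\gamma_2).
\end{align*}
This one line carries essentially all of the content of the proposition; what remains is a structural observation about what interlacing forces on the individual differences $x_{n,j}-\tilde{x}_{n,j}$.

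Next, I would note that if the two sets of zeros strictly interlace in the sense defined just above the proposition, then by the standard dichotomy one has either $\tilde{x}_{n,1}<x_{n,1}<\tilde{x}_{n,2}<x_{n,2}<\cdots<\tilde{x}_{n,n}<x_{n,n}$ or the analogous inequality with the roles of the two sequences swapped. In the first case $x_{n,j}-\tilde{x}_{n,j}>0$ for every $j$, and in the second case $x_{n,j}-\tilde{x}_{n,j}<0$ for every $j$. So interlacing forces the entire finite set of differences $\{x_{n,j}-\tilde{x}_{n,j}\}_{j=1}^{n}$ to share a single (strict) sign.

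To finish, suppose towards contradiction that interlacing does occur and that, at the same time, $(\gamma_1-\gamma_2)(x_{n,j_0}-\tilde{x}_{n,j_0})>0$ for some $j_0$. By the previous paragraph every difference $x_{n,j}-\tilde{x}_{n,j}$ has the same strict sign as $x_{n,j_0}-\tilde{x}_{n,j_0}$, hence the same strict sign as $\gamma_1-\gamma_2$. Summing over $j$ then yields $\mathrm{sgn}\bigl(\sum_{j}(x_{n,j}-\tilde{x}_{n,j})\bigr)=\mathrm{sgn}(\gamma_1-\gamma_2)$, which contradicts the identity displayed above (since that identity forces the sum to have sign $-\mathrm{sgn}(\gamma_1-\gamma_2)$, using $\gamma_1\neq\gamma_2$). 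This contradiction completes the argument.

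I do not expect any genuine obstacle: the only point requiring a little care is the dichotomy in the second paragraph, where one must exclude ties between an $x_{n,j}$ and a $\tilde{x}_{n,j}$. A common zero would mean $\mathcal{P}_n$ and $\tilde{\mathcal{P}}_n$ share a root, and under the strict interlacing convention adopted by the paper this is disallowed; should the author prefer a weaker convention, the statement can be tightened by replacing "same sign" with "same sign, both nonzero," without altering the skeleton of the argument.
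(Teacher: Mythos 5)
Your proof is correct and takes essentially the same approach as the paper's: both rest on the sum-of-zeros identity $\sum_{j=1}^{n}\bigl(x_{n,j}-\tilde{x}_{n,j}\bigr)=\gamma_2-\gamma_1$ (read off from the traces of the recurrence coefficients) together with the observation that interlacing forces all the differences $x_{n,j}-\tilde{x}_{n,j}$ to carry one strict sign, giving a sign contradiction. The only difference is cosmetic: you spell out the interlacing dichotomy and treat both sign cases at once, whereas the paper argues the case $\gamma_1>\gamma_2$ and notes the other case is analogous.
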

\begin{proof}
Suppose $\gamma_1>\gamma_2$ and $x_{n,j}>\tilde{x}_{n,j}$ for some $j=1,2,\cdots,n$.
If the zeros of $\mathcal{P}_n(x)$ and $\tilde{\mathcal{P}}_n(x)$ interlace,
then $\sum_{j=1}^{n}\tilde{x}_{n,j} < \sum_{j=1}^{n}x_{n,j}$
which is a contradiction.

The case $\gamma_1<\gamma_2$ and $x_{n,j}<\tilde{x}_{n,j}$
can be proved similarly.
\end{proof}
The last result in this section shows that while the generalised complementary chain sequence of associated with $\{\hat{\mathcal{P}}_n(x)\}$ yields an OPS, that associated with the associated polynomials $\{\hat{\mathcal{P}^{(1)}}_n(x)\}$ leads to a KOPS.
\begin{theorem}
  Consider the OPS $\{\hat{\mathcal{P}}^{(1)}_n(x)\}_{n=0}^{\infty}$. Then the generalised complementary chain sequence associated with $\hat{\mathcal{P}}^{(1)}_n(x)$ leads to a KOPS $\{\mathcal{Q}_n(x)\}$ satisfying the relation
  \begin{align}
  \label{eqn: three term recurrence relation for kernel with associated polynomials}
  \mathcal{Q}_{n+1}(x)=(x-\gamma_{2n+3}-\gamma_{2n+4})\mathcal{Q}_n(x)-
      \gamma_{2n+2}\gamma_{2n+3}\mathcal{Q}_{n-1}(x),
  \quad n\geq0
  \end{align}
  with $\mathcal{Q}_{-1}(x)=0$ and $\mathcal{Q}_0(x)=1$.
\end{theorem}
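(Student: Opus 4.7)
The plan is to imitate the paper's derivation of $\{\hat{\mathcal{P}}_n\}$ from $\{\mathcal{P}_n\}$ one level deeper, now applied to the associated system $\{\hat{\mathcal{P}}_n^{(1)}\}$. Since $\hat{\mathcal{P}}_n^{(1)}$ is the second solution of the recurrence satisfied by $\hat{\mathcal{P}}_n$ with initial data $\hat{\mathcal{P}}_0^{(1)} = 0$, $\hat{\mathcal{P}}_1^{(1)} = 1$, setting $\mathcal{R}_n := \hat{\mathcal{P}}_{n+1}^{(1)}$ (a monic polynomial of degree $n$) produces an OPS satisfying
\begin{align*}
\mathcal{R}_{n+1}(x) = (x - \gamma_{2n+3} - \gamma_{2n+4})\mathcal{R}_n(x) - \gamma_{2n+1}\gamma_{2n+4}\mathcal{R}_{n-1}(x),\quad n \geq 0,
\end{align*}
with $\mathcal{R}_{-1} = 0$, $\mathcal{R}_0 = 1$; in particular its recurrence coefficients are $b_{n+1}^{(R)} = \gamma_{2n+3}+\gamma_{2n+4}$ and $(a_n^{(R)})^2 = \gamma_{2n+1}\gamma_{2n+4}$.

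Next I would identify a non-minimal parameter sequence for the chain sequence $d_n = (a_n^{(R)})^2/(b_n^{(R)}b_{n+1}^{(R)})$ of $\{\mathcal{R}_n\}$ at $0$. The candidate
\begin{align*}
g_n^{(R)} = \frac{\gamma_{2n+4}}{\gamma_{2n+3}+\gamma_{2n+4}},\quad n \geq 0,
\end{align*}
passes the check $(1 - g_{n-1}^{(R)})g_n^{(R)} = d_n$ and is non-minimal because $g_0^{(R)} > 0$. Applying the definition of generalised complementary chain sequence with $k_n' = 1 - g_n^{(R)} = \gamma_{2n+3}/(\gamma_{2n+3}+\gamma_{2n+4})$ then yields the new chain sequence
\begin{align*}
(1 - k_{n-1}')k_n' = \frac{\gamma_{2n+2}\gamma_{2n+3}}{(\gamma_{2n+1}+\gamma_{2n+2})(\gamma_{2n+3}+\gamma_{2n+4})}.
\end{align*}
Retaining the diagonal coefficients (exactly as in the earlier passage $\mathcal{P}_n \to \hat{\mathcal{P}}_n$ summarised in \eqref{eqn: three term recurrence relation for GCCS polynomials}), the new OPS $\{\mathcal{Q}_n\}$ inherits $b_{n+1}^Q = \gamma_{2n+3}+\gamma_{2n+4}$, and the identity $(a_n^Q)^2 = b_n^Q b_{n+1}^Q (1-k_{n-1}')k_n' = \gamma_{2n+2}\gamma_{2n+3}$ produces exactly \eqref{eqn: three term recurrence relation for kernel with associated polynomials}.

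Finally, to recognise $\{\mathcal{Q}_n\}$ as a KOPS, I would invoke item (ii) of the characterisation stated just before Theorem~\ref{thm:main theorem Pn Kn for CCS case}: setting $\Gamma_k := \gamma_{k+1}$, the recurrence coefficients of $\{\mathcal{Q}_n\}$ read $b_{n+1}^Q = \Gamma_{2n+2} + \Gamma_{2n+3}$ and $(a_n^Q)^2 = \Gamma_{2n+1}\Gamma_{2n+2}$, which is precisely the form characterising kernel polynomials; in fact $\{\mathcal{Q}_n\}$ is then realised as the KOPS of $\{\mathcal{K}_n\}$. The chief technical obstacle is step two: correctly reading off the non-minimal parameter sequence for $\{\mathcal{R}_n\}$, whose recurrence is itself already in the non-standard, generalised-complementary shape; once this is done via the formal index shift $\gamma_k \mapsto \gamma_{k+2}$ that identifies the structural pattern of $\{\mathcal{R}_n\}$ with that of $\{\hat{\mathcal{P}}_n\}$, the remainder is a careful but routine index manipulation.
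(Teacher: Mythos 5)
Your steps deriving the recurrence for $\mathcal{Q}_n$ are exactly the paper's: you pass to the shifted associated polynomials $\mathcal{R}_n=\hat{\mathcal{P}}^{(1)}_{n+1}$, read off the chain sequence, pick the non-minimal parameters $g_n=\gamma_{2n+4}/(\gamma_{2n+3}+\gamma_{2n+4})$, and form the generalised complementary chain sequence while keeping the diagonal coefficients; in fact your off-diagonal coefficient $\gamma_{2n+1}\gamma_{2n+4}$ for $\mathcal{R}_n$ is the correct one (the paper's displayed recurrence for $\hat{\mathcal{P}}^{(1)}_n$ contains a typo, but its chain-sequence numerator agrees with your value). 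For the KOPS part you replace the paper's explicit construction by a pattern match: the paper defines $\mathcal{U}_n$ through $x\mathcal{Q}_n(x)=\mathcal{U}_{n+1}(x)+\gamma_{2n+3}\mathcal{U}_n(x)$, finds $\sigma_{n+1}=\gamma_{2n+2}+\gamma_{2n+3}$, $\mu_n=\gamma_{2n+1}\gamma_{2n+2}>0$, and invokes Favard to conclude that $\mathcal{Q}_n$ is the KOPS of $\mathcal{U}_n$. Your shift $\Gamma_k=\gamma_{k+1}$ produces, via item (i), precisely that same parent OPS (coefficients $b_1=\Gamma_2=\gamma_3$, $b_{n+1}=\gamma_{2n+2}+\gamma_{2n+3}$, $a_n^2=\gamma_{2n+1}\gamma_{2n+2}$), so the two arguments are equivalent in substance; you should only add the (easy) remark that $\Gamma_n=\gamma_{n+1}>0$, so the symmetric OPS attached to $\Gamma$ exists and the even--odd decomposition behind items (i)--(ii) applies to it.

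The one genuine error is your closing identification ``$\{\mathcal{Q}_n\}$ is then realised as the KOPS of $\{\mathcal{K}_n\}$.'' The OPS generated by $\Gamma$ has first diagonal coefficient $\Gamma_2=\gamma_3$, whereas $\mathcal{K}_n$ has $b_1^{(2)}=\gamma_2+\gamma_3$; the $\Gamma$-OPS is therefore the co-recursive system $\mathcal{U}_n(x)=\mathcal{K}_n(x)+\gamma_2\mathcal{K}^{(1)}_n(x)$, not $\mathcal{K}_n$ itself. Indeed, the kernel system of $\mathcal{K}_n$ at the origin would have first diagonal coefficient $\gamma_2+\gamma_3+\gamma_3\gamma_4/(\gamma_2+\gamma_3)$, which equals $b_1^{Q}=\gamma_3+\gamma_4$ only when $\gamma_4=\gamma_2+\gamma_3$. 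This misattribution does not affect the theorem's assertion that $\mathcal{Q}_n$ is a KOPS (of some OPS), but the parent system must be named as the co-recursive $\mathcal{U}_n$, as in the paper and its subsequent corollary.
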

\begin{proof}
  It is clear from
  \eqref{eqn: three term recurrence relation for GCCS polynomials}
  that
  $\{\hat{\mathcal{P}}^{(1)}_n(x)\}_{n=1}^{\infty}$ satisfy
  \begin{align*}
  \hat{\mathcal{P}}^{(1)}_{n+1}(x)=(x-\gamma_{2n+3}-\gamma_{2n+4})
   \hat{\mathcal{P}}^{(1)}_{n}(x)-\gamma_{2n+2}\gamma_{2n+3}
    \hat{\mathcal{P}}^{(1)}_{n-1}(x),
    \quad n\geq1
  \end{align*}
  with $\hat{\mathcal{P}}^{(1)}_{-1}(x)=0$ and $\hat{\mathcal{P}}^{(1)}_{0}(x)=1$.
  The associated chain sequence is
  \begin{align*}
  \left\{\dfrac{\gamma_{2n+1}\gamma_{2n+4}}
        {(\gamma_{2n+1}+\gamma_{2n+4})
        (\gamma_{2n+3}+\gamma_{2n+5})}\right\}_{n=1}^{\infty}
  \end{align*}
  with the (non-minimal) parameter sequence
  $\{\gamma_{2n+4}/(\gamma_{2n+3}+\gamma_{2n+4})\}_{n=0}^{\infty}$.
  Hence the OPS $\{\mathcal{Q}_n(x)\}$
  associated with the generalised complementary chain sequence satisfy
  \eqref{eqn: three term recurrence relation for kernel with associated   polynomials}.

  To prove that $\{\mathcal{Q}_{n}(x)\}$ is a KOPS, consider the polynomials $\{\mathcal{U}_{n}(x)\}_{n=1}^{\infty}$ given by
  \begin{align*}
  x\mathcal{Q}_{n}(x)=\mathcal{U}_{n+1}(x)+\gamma_{2n+3}\mathcal{U}_{n}(x),
  \quad n\geq0
  \end{align*}
  The first thing that we require is that
  $\mathcal{U}_{n+1}(0)=-\gamma_{2n+3}\mathcal{U}_{n}(0)$ so that
  choosing $\mathcal{U}_{1}(0)=-\gamma_3$, we have
  $\mathcal{U}_{n+1}(0)=(-1)^{n+1}\gamma_{2n+3}\gamma_{2n+1}\cdots
  \gamma_5\gamma_3$.

  Suppose now that $\{\mathcal{U}_{n}(x)\}_{n=1}^{\infty}$ satisfy the recurrence relation
  \begin{align*}
  \mathcal{U}_{n+1}(x)=(x-\sigma_{n+1})\mathcal{U}_{n}(x)-
   \eta_n\mathcal{U}_{n-1}(x),
  \quad n\geq1,
  \end{align*}
  with $\mathcal{U}_{0}(x)=1$ and $\mathcal{U}_{1}(x)=x-\gamma_3$ and where
  the coefficients $\{\sigma_n\}$ and $\{\eta_n\}$ are to be determined.
  One way for the recurrence relation to hold, is that we must have
  $\mathcal{U}_{n+1}(0)=-\sigma_{n+1}\mathcal{U}_{n}(0)-
  \mu_n\mathcal{U}_{n-1}(0)$
  which implies
  $\gamma_{2n+1}\sigma_{n+1}-\mu_n=\gamma_{2n+3}\gamma_{2n+1}$, $n\geq1$.
  One possible choice for $\sigma_{n+1}$ and $\mu_n$ satisfying the above equations is
  \begin{align*}
  \sigma_{n+1}=\gamma_{2n+3}+\gamma_{2n+2}
  \quad\mbox{and}\quad
  \mu_n=\gamma_{2n+1}\gamma_{2n+2},
  \quad n\geq1.
  \end{align*}
  Since $\mu_n>0$ for $n\geq1$, by Favard's Theorem
  \cite[Theorem 4.4, p. 21]{Chihara_book}
  $\{\mathcal{U}_{n}(x)\}_{n=1}^{\infty}$ becomes a OPS and
  $\{\mathcal{Q}_{n}(x)\}_{n=1}^{\infty}$ its corresponding KOPS
  \cite[eqn. 7.3, p. 35]{Chihara_book}.
\end{proof}
\begin{corollary}
The following holds
\begin{align}
\label{eqn: linear combination of Pn quasi orthogonality}
\lefteqn{x^2\mathcal{Q}_{n}(x)-
\gamma_2[\mathcal{K}^{(1)}_{n+1}(x)+\gamma_{2n+3}\mathcal{K}^{(1)}_{n}(x)]=}
\nonumber\\
&&\mathcal{P}_{n+2}(x)+(\gamma_{2n+3}+\gamma_{2n+4})\mathcal{P}_{n+1}(x)+
\gamma_{2n+2}\gamma_{2n+3}\mathcal{P}_{n}(x),
\quad n\geq1.
\end{align}
\end{corollary}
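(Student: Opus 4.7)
The plan is to collapse both sides to $x\mathcal{P}_{n+1}(x)$ by exploiting the structure already established in the preceding theorem.

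First, I would simplify the right-hand side directly. Since $\mathcal{P}_{n+2}(x) = (x - b_{n+2}^{(1)})\mathcal{P}_{n+1}(x) - (a_{n+1}^2)^{(1)}\mathcal{P}_n(x)$ with $b_{n+2}^{(1)} = \gamma_{2n+3}+\gamma_{2n+4}$ and $(a_{n+1}^2)^{(1)} = \gamma_{2n+2}\gamma_{2n+3}$, the three terms on the right telescope to $x\mathcal{P}_{n+1}(x)$.

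Second, I would analyse the auxiliary polynomials $\{\mathcal{U}_n\}$ constructed in the proof of the preceding theorem. Their recurrence coefficients $\sigma_{n+1} = \gamma_{2n+2}+\gamma_{2n+3}$ and $\mu_n = \gamma_{2n+1}\gamma_{2n+2}$ coincide with $b_{n+1}^{(2)}$ and $(a_n^2)^{(2)}$, the recurrence coefficients governing $\mathcal{K}_n$ and the associated polynomials $\mathcal{K}^{(1)}_n$. The two-dimensional solution space of this common three-term recurrence is spanned by $\{\mathcal{K}_n, \mathcal{K}^{(1)}_n\}$, and matching $\mathcal{U}_0 = 1$, $\mathcal{U}_1 = x - \gamma_3$ against the pair $\mathcal{K}_0 = 1, \mathcal{K}_1 = x - \gamma_2 - \gamma_3$ and $\mathcal{K}^{(1)}_0 = 0, \mathcal{K}^{(1)}_1 = 1$ yields the key decomposition $\mathcal{U}_n = \mathcal{K}_n + \gamma_2 \mathcal{K}^{(1)}_n$.

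Third, I would substitute this decomposition into the identity $x\mathcal{Q}_n(x) = \mathcal{U}_{n+1}(x) + \gamma_{2n+3}\mathcal{U}_n(x)$ from the previous theorem to obtain
\begin{align*}
x\mathcal{Q}_n(x) = \bigl[\mathcal{K}_{n+1}(x) + \gamma_{2n+3}\mathcal{K}_n(x)\bigr] + \gamma_2\bigl[\mathcal{K}^{(1)}_{n+1}(x) + \gamma_{2n+3}\mathcal{K}^{(1)}_n(x)\bigr].
\end{align*}
Multiplying by $x$ and invoking the kernel-polynomial identity $x\mathcal{K}_j(x) = \mathcal{P}_{j+1}(x) + \gamma_{2j+2}\mathcal{P}_j(x)$ (established by a short induction from the recurrences for $\mathcal{P}_n$ and $\mathcal{K}_n$, using the easily verified ratio $\mathcal{P}_{j+1}(0)/\mathcal{P}_j(0) = -\gamma_{2j+2}$), the first bracket becomes precisely $\mathcal{P}_{n+2}(x) + (\gamma_{2n+3}+\gamma_{2n+4})\mathcal{P}_{n+1}(x) + \gamma_{2n+2}\gamma_{2n+3}\mathcal{P}_n(x)$, which is the right-hand side of the corollary.

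The main obstacle is isolating the decomposition $\mathcal{U}_n = \mathcal{K}_n + \gamma_2 \mathcal{K}^{(1)}_n$: the conceptual crux is recognising that $\{\mathcal{U}_n\}, \{\mathcal{K}_n\}$, and $\{\mathcal{K}^{(1)}_n\}$ share one and the same three-term recurrence, so that the first is necessarily a linear combination of the latter two. Once that observation is made, the remaining steps reduce to the standard kernel-polynomial identity and routine algebraic bookkeeping.
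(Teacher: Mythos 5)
Your proof is correct and follows essentially the same route as the paper's: the decomposition $\mathcal{U}_n(x)=\mathcal{K}_n(x)+\gamma_2\mathcal{K}^{(1)}_n(x)$ (which the paper obtains by citing the co-recursive result of Chihara, and which you re-derive by observing that $\mathcal{U}_n$, $\mathcal{K}_n$, $\mathcal{K}^{(1)}_n$ satisfy the same three-term recurrence and matching initial conditions), followed by substitution into $x\mathcal{Q}_n(x)=\mathcal{U}_{n+1}(x)+\gamma_{2n+3}\mathcal{U}_n(x)$ and the kernel identity $x\mathcal{K}_j(x)=\mathcal{P}_{j+1}(x)+\gamma_{2j+2}\mathcal{P}_j(x)$ (correctly stated by you; the paper's $\mathcal{P}_{n-1}$ there is a typo). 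Note that, exactly as in the paper's own argument, what the computation actually produces on the left is $x^2\mathcal{Q}_n(x)-\gamma_2\, x\,[\mathcal{K}^{(1)}_{n+1}(x)+\gamma_{2n+3}\mathcal{K}^{(1)}_n(x)]$, so the absent factor $x$ in the displayed corollary is a typographical slip in the statement rather than a gap in your proof.
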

\begin{proof}
Comparing with the recurrence relation
\eqref{eqn:general ttrr for polynomial and kernel}, it can be observed that
$\{\mathcal{U}_{n}(x)\}_{n=1}^{\infty}$ are co-recursive with respect to
$\{\mathcal{K}_{n}(x)\}_{n=1}^{\infty}$ and arise due to change in the initial value $\mathcal{U}_{1}(x)=\mathcal{K}_{1}(x)+\gamma_2$.
Hence $\mathcal{U}_{n}(x)=
\mathcal{K}_{n}(x)+\gamma_2\mathcal{K}^{(1)}_{n}(x)$, $n\geq1$
\cite{Chihara-co-recursive},
so that
\begin{align*}
x\mathcal{Q}_{n}(x)=
(\mathcal{K}_{n+1}(x)+\gamma_{2n+3}\mathcal{K}_{n}(x))+
\gamma_2(\mathcal{K}^{(1)}_{n+1}(x)+\gamma_{2n+3}\mathcal{K}^{(1)}_{n}(x)).
\end{align*}
Since $x\mathcal{K}_{n}(x)=\mathcal{P}_{n+1}(x)+\gamma_{2n+2}\mathcal{P}_{n-1}(x)$,
$n\geq1$, \eqref{eqn: linear combination of Pn quasi orthogonality} follows.
\end{proof}
\begin{remark}
The polynomials in the left hand side of
\eqref{eqn: linear combination of Pn quasi orthogonality}
are quasi-orthogonal of order 2 on $[0, \infty]$. For details regarding
quasi-orthogonality, the reader is referred to \cite{Brezinski-Driver-ANM-2004-quasi-orthogonality} and the references therein.
\end{remark}
\section{Concluding remarks}
\begin{enumerate}[(A)]

\item
The monic Jacobi matrix of the polynomials
$\mathcal{P}_n(x)$ and $\tilde{\mathcal{P}}_n(x)$
are given respectively by,
\begin{align*}
J_{\mathcal{P}}=
\left(
  \begin{array}{ccccc}
    \gamma_2         & 1                 & 0                 & \cdots \\
    \gamma_2\gamma_3 & \gamma_3+\gamma_4 & 1                 & \cdots \\
    0                & \gamma_4\gamma_5  & \gamma_5+\gamma_6 & \cdots \\
    0                & 0                 & \gamma_6\gamma_7  & \cdots \\
    \vdots           & \vdots            & \vdots            & \ddots \\
  \end{array}
\right)
\quad
J_{\tilde{\mathcal{P}}}=
\left(
  \begin{array}{ccccc}
    \gamma_1         & 1                 & 0                 & \cdots\\
    \gamma_1\gamma_4 & \gamma_3+\gamma_4 & 1                 & \cdots\\
    0                & \gamma_3\gamma_6  & \gamma_5+\gamma_6 & \cdots\\
    0                & 0                 & \gamma_5\gamma_8  & \cdots\\
    \vdots           & \vdots            & \vdots            & \ddots\\
  \end{array}
\right)
\end{align*}
The respective $LU$ decomposition of the above Jacobi matrices
are then given by,
\begin{align*}
L_{\mathcal{P}}=
\left(
  \begin{array}{ccccc}
    1        & 0       & 0       &0      & \cdots \\
    \gamma_3 & 1       & 0       &0      & \cdots \\
    0        &\gamma_5 & 1       &0      & \cdots \\
    0        & 0       &\gamma_7 &1      & \cdots \\
    \vdots   & \vdots  & \vdots  &\vdots &\ddots \\
  \end{array}
\right),
\quad
U_{\mathcal{P}}=
\left(
  \begin{array}{ccccc}
    \gamma_2 & 1       & 0       & 0        & \cdots \\
    0        &\gamma_4 & 1       & 0        & \cdots \\
    0        & 0       &\gamma_6 & 1        & \cdots \\
    0        & 0       & 0       & \gamma_8 & \cdots \\
    \vdots   & \vdots  & \vdots  & \vdots   &\ddots \\
  \end{array}
\right)
\end{align*}
and,
\begin{align*}
L_{\tilde{\mathcal{P}}}=
\left(
  \begin{array}{ccccc}
    1        & 0       & 0       &0      & \cdots \\
    \gamma_4 & 1       & 0       &0      & \cdots \\
    0        &\gamma_6 & 1       &0      & \cdots \\
    0        & 0       &\gamma_8 &1      & \cdots \\
    \vdots   & \vdots  & \vdots  &\vdots &\ddots \\
  \end{array}
\right),
\quad
U_{\tilde{\mathcal{P}}}=
\left(
  \begin{array}{ccccc}
    \gamma_1 & 1       & 0       & 0        & \cdots \\
    0        &\gamma_3 & 1       & 0        & \cdots \\
    0        & 0       &\gamma_5 & 1        & \cdots \\
    0        & 0       & 0       & \gamma_7 & \cdots \\
    \vdots   & \vdots  & \vdots  & \vdots   &\ddots \\
  \end{array}
\right)
\end{align*}
It may be observed that
$L_{\tilde{\mathcal{P}}}$
can be obtained from
$U_{\mathcal{P}}$
by deleting the first column of
$U_{\mathcal{P}}$
while
$U_{\tilde{\mathcal{P}}}$
can be obtained from
$L_{\mathcal{P}}$
by adding the first column of
$U_{\mathcal{P}}$,
with $\gamma_2$ replaced by $\gamma_1$,
as the first column of
$L_{\mathcal{P}}$.
The Jacobi matrix and its LU decomposition
for the polynomial system
$\{\hat{\mathcal{P}}_n(x)\}_{n=1}^{\infty}$
can be obtained similarly.

The monic Jacobi matrix associated with the three canonical transformations,
the Christoffel, the Geronimus and the Uvarov transformations,
are found in
\cite{bueno-paco-darboux-trans-linear-alg-appl-2004},
wherein the procedure is given using the Darboux transformation.
Hence, it will be interesting to study the application of Darboux transformation in the case of (generalised) complementary chain sequences.

\item

The results obtained in this note
can be applied to the Laguerre polynomials
which provide some insights into chain sequences related
to these orthogonal polynomials.

Consider the three term recurrence relation satisfied by
the generalized Laguerre polynomials $\{L_n^{(\alpha)}\}$,
\cite[Page 154]{Chihara_book},
\begin{align}
\label{eqn: recurrence relation for generalized laguerre polynomials}
\mathcal{R}_{n+1}^{(1)}(x)=[x-(2n+\alpha+1)]\mathcal{R}_{n}^{(1)}(x)
-n(n+\alpha)\mathcal{R}_{n-1}^{(1)}(x),
\quad n\geq1,
\end{align}
with
$\mathcal{R}_{0}^{(1)}(x)=1$ and $\mathcal{R}_{1}^{(1)}(x)=x-(1+\alpha)$
and where $\mathcal{R}_{n}^{(1)}(x)\equiv L_n^{(\alpha)}(x)$, $n\geq1$.
Using the notations introduced immediately after
\eqref{eqn:general ttrr for polynomial and kernel},
the associated chain sequence $\{d_n\}_{n=1}^{\infty}$ is,
\begin{align*}
d_n=\dfrac{(a_n^2)^{(1)}}{b_{n}^{(1)}b_{n+1}^{(1)}}=
\dfrac{n(n+\alpha)}{(2n+\alpha-1)(2n+\alpha+1)},
\quad n\geq1,
\end{align*}
and as can be easily verified, the minimal parameters are given by,
$m_n=n/(2n+\alpha+1)$, $n\geq0$.
It is easily seen that
$0<m_n<1/2$, $n\geq1$
and hence by
\cite[Lemma 2.3]{CCS_OPUC},
the chain sequence complementary to
$d_n$ is a SPPCS.
Moreover, for
$-1<\alpha<0$, $m_n/(1-m_n)>n/(n-1)>1$
and hence by Wall's criteria
for SPPCS
\cite[Theorem 19.3]{Wall_book},
$\{d_n\}$ determines its parameters uniquely.
Further, choosing $\gamma_1=0$,
it is found that
$\gamma_2=(1+\alpha)$
and $\gamma_2\gamma_3=1.(1+\alpha)$ implies
$\gamma_3=1$.
Similarly, $\gamma_3+\gamma_4=\alpha+3$ implies
$\gamma_4=\alpha+2$.
Proceeding further on similar lines, it can be easily proved
by induction that
$\gamma_1=0$, $\gamma_{2n}=n+\alpha$ and
$\gamma_{2n+1}=n$, $n\geq1$.
This gives the recurrence relation for the associated kernel
polynomials as
\begin{align}
\label{eqn: recurrence relation for kernel polynomials for Lauguerre}
\mathcal{R}_{n+1}^{(2)}(x)=[x-(2n+\alpha+2)]\mathcal{R}_{n}^{(2)}(x)
-n(n+\alpha+1)\mathcal{R}_{n-1}^{(2)}(x),
\quad n\geq1,
\end{align}
with
$\mathcal{R}_{0}^{(2)}(x)=1$ and $\mathcal{R}_{1}^{(2)}(x)=x-(2+\alpha)$.
Clearly, as is known, $\mathcal{R}_{n}^{(2)}(x)=L_n^{(\alpha+1)}(x)$,
$n\geq1$.

Consider now the polynomials $\{\mathcal{E}_n(x)\}_{n=0}^{\infty}$ satisfying the recurrence relation
\begin{align*}
\mathcal{E}_{n+1}(x)=[x-(2n+\alpha+2)]\mathcal{E}_{n}(x)-
(n+1)(n+\alpha)\mathcal{E}_{n-1}(x),
\quad n\geq1
\end{align*}
with $\mathcal{E}_{0}(x)=1$ and $\mathcal{E}_{1}(x)=x-(\alpha+1)$.
From the related chain sequence, we obtain the sequence $\{\gamma_n\}_{n=1}^{\infty}$
where $\gamma_1=0$, $\gamma_{2n}=n+\alpha$ and $\gamma_{2n+1}=n+1$, $n\geq1$.
The kernel polynomial sequence $\{\mathcal{K}_{n}(x)\}_{n=0}^{\infty}$ associated with $\{\mathcal{E}_{n}(x)\}_{n=0}^{\infty}$ satisfy
\begin{align*}
\mathcal{K}_{n+1}(x)=[x-(2n+\alpha+3)]\mathcal{K}_{n}(x)-
(n+1)(n+\alpha+1)\mathcal{K}_{n}(x),
\quad n\geq0
\end{align*}
with $\mathcal{K}_{-1}(x)=0$ and $\mathcal{K}_{0}(x)=1$.
If we let $\gamma_1=1$, the resulting polynomials satisfy
\begin{align*}
\mathcal{P}_{n+1}(x)=[x-(2n+\alpha+2)]\mathcal{P}_{n}(x)-(n+1)(n+\alpha)
\mathcal{P}_{n-1}(x),
\quad n\geq0
\end{align*}
with $\mathcal{P}_{-1}(x)=0$ and $\mathcal{P}_{0}(x)=1$.
From
\eqref{eqn: recurrence relation for generalized laguerre polynomials}
it is clear that these polynomials are
the associated generalized Laguerre polynomials of order 1 but with $\alpha$ shifted to $\alpha-1$.
%
%
%
%
The polynomial sequence $\{\hat{\mathcal{P}}_{n}(x)\}$
corresponding to the generalized complementary chain sequence satisfy
\begin{align*}
\hat{\mathcal{P}}_{n+1}(x)=[x-(2n+\alpha+2)]\hat{\mathcal{P}}_{n}(x)-
n(n+\alpha+1)\hat{\mathcal{P}}_{n-1}(x),
\quad n\geq0
\end{align*}
with $\hat{\mathcal{P}}_{-1}(x)=0$ and
$\hat{\mathcal{P}}_{0}(x)=1$.
Comparing with
\eqref{eqn: recurrence relation for kernel polynomials for Lauguerre},
we find that $\hat{\mathcal{P}}_{n}(x)\equiv L_n^{(\alpha+1)}(x)$, $n\geq1$.

The (co-recursive) polynomials $\{\tilde{\mathcal{P}}_{n}(x)\}$ corresponding to the complementary chain sequence satisfy the recurrence
\begin{align*}
\tilde{\mathcal{P}}_{n+1}(x)=[x-(2n+\alpha+2)]\tilde{\mathcal{P}}_{n}(x)-
n(n+\alpha+1)\tilde{\mathcal{P}}_{n-1}(x),
\quad n\geq1
\end{align*}
with $\tilde{\mathcal{P}}_{0}(x)=1$ and $\tilde{\mathcal{P}}_{1}(x)=x-1$.

Moreover, since the condition in Corollary
\ref{cor: invariance of kernel polynomials in ccs}
is satisfied,
the kernel polynomials for the OPS
$\{\tilde{\mathcal{P}}_{n}(x)\}_{n=0}^{\infty}$
is the same (upto a constant multiple)
as that for the OPS
$\{\mathcal{E}_{n}(x)\}_{n=0}^{\infty}$.

\item

We recall that the Routh-Romanovski Laguerre polynomials $\{N_n^{(p)}(x)\}$  which are analogous to Laguerre polynomials $L_n^{(p)}(x)$
are given by
\begin{align*}
L_n^{(p)}(x)=\dfrac{x^n}{n!}N_n^{(p+2n+1)}\left(\frac{1}{x}\right).
\end{align*}
The sequence $\{N_n^{(p)}(x)\}$ are the polynomials 
studied as one of the three finite classes of 
hypergeometric orthogonal polynomials
\cite{Jamei-three-finite-class-ITSF-2002},
see also 
\cite{Romanovski-1929, Routh-1884}.
%

Further, the sequence $\{N_n^{(p)}(x)\}$ satisfy the three term recurrence
\cite[eqn. 4.19]{Jamei-three-finite-class-ITSF-2002}
\begin{align*}
N_{n+1}^{(p)}(x)=
&\left(\dfrac{(p-(2n+2))(p-(2n+1))}{p-(n+1)}x-
\dfrac{p(p-(2n+1))}{(p-(n+1))(p-2n)}\right)N_{n}^{(p)}(x)\\
&-
\dfrac{n(p-(2n+2))}{(p-(n+1))(p-2n)}N_{n-1}^{(p)}(x)
\end{align*}
The above recurrence relation can be normalised to satisfy a recurrence relation of the form
\eqref{eqn:three term recurrence relation for orthogonal}.
Hence it will be interesting to see the effect of
(generalised) complementary chain sequence on these polynomials.
It is important to remark that the results related to $q$-analogue of Routh-Romanovski polynomials on the unit circle are studied in
\cite{Ranga-arxiv-Routh-Romanovski}.
\end{enumerate}


\begin{thebibliography}{99}
%
\bibitem{Andrews-Askey-Roy-book}
        G. E. Andrews, R. Askey\ and\ R. Roy,
        {\it Special functions},
        Encyclopedia of Mathematics and its Applications, 71,
        Cambridge Univ. Press, Cambridge, 1999.
%
%
\bibitem{CCS_OPUC}
        K. K. Behera, A. Sri Ranga\ and\ A. Swaminathan,
        Orthogonal polynomials associated with complementary chain sequences, SIGMA Symmetry Integrability Geom. Methods Appl.
        {\bf 12} (2016), Paper No. 075, 17 pp.
%
\bibitem{Ranga-Berti-companion-OP-applications}
        A. C. Berti\ and\ A. Sri Ranga, Companion orthogonal polynomials: some applications, Appl. Numer. Math. {\bf 39} (2001), no.~2,
        127--149.

%
\bibitem{Ranga-chain-seq-symmetric-OP-JCAM-2002}
        C. F. Bracciali, D. K. Dimitrov\ and\ A. Sri Ranga,
        Chain sequences and symmetric generalized orthogonal polynomials,
        J. Comput. Appl. Math. {\bf 143} (2002), no.~1, 95--106.
%
\bibitem{Brezinski-Driver-ANM-2004-quasi-orthogonality}
        C. Brezinski, K. A. Driver\ and\ M. Redivo-Zaglia, Quasi-orthogonality with applications to some families of classical orthogonal polynomials,
        Appl. Numer. Math.
        {\bf 48} (2004), no.~2, 157--168.
%
\bibitem{bueno-paco-darboux-trans-linear-alg-appl-2004}
        M. I. Bueno\ and\ F. Marcell\'an,
        Darboux transformation and perturbation of linear functionals,
        Linear Algebra Appl.
        {\bf 384} (2004), 215--242.
%
\bibitem{castillo-co-polynomials-JMAA-2015}
        K. Castillo, F. Marcell\'an\ and\ J. Rivero,
        On co-polynomials on the real line,
        J. Math. Anal. Appl.
        {\bf 427} (2015), no.~1, 469--483.

%

%
\bibitem{Chihara-co-recursive}
        T. S. Chihara,
        On co-recursive orthogonal polynomials,
        Proc. Amer. Math. Soc.
        {\bf 8} (1957), 899--905.
%
\bibitem{Chihara_book}
        T. S. Chihara, {\it An introduction to orthogonal polynomials},
        Gordon and Breach, New York, 1978.
%
\bibitem{garza-paco-szegotrans-asso-poly-2009}
        L. Garza\ and\ F. Marcell\'an,
        Szeg\H o transformations and $n$th order associated
        polynomials on the unit circle,
         Comput. Math. Appl.
         {\bf 57} (2009), no.~10, 1659--1671.
%
\bibitem{Ismail_book}
        M. E. H. Ismail,
        {\it Classical and quantum orthogonal polynomials in one variable},
        reprint of the 2005 original,
        Encyclopedia of Mathematics and its Applications, 98,
        Cambridge Univ. Press, Cambridge, 2009.
%
\bibitem{paco-pert-rec-rel-JCAM-1990}
        F. Marcell\'an, J. S. Dehesa\ and\ A. Ronveaux,
        On orthogonal polynomials with perturbed recurrence relations,
        J. Comput. Appl. Math.
        {\bf 30} (1990), no.~2, 203--212.
%
\bibitem{Jamei-three-finite-class-ITSF-2002}
        M. Masjedjamei,
        Three finite classes of hypergeometric orthogonal polynomials and their application in functions approximation,
        Integral Transforms Spec. Funct.
        {\bf 13} (2002), no.~2, 169-190.
%
\bibitem{Romanovski-1929}
        V. I. Romanovski,
        Sur quelques classes nouvelles de polyn\^{o}mes orthogonaux,
        C. R. Acad. Sci. Paris 188 (1929), pp. 1023-1025.
%
\bibitem{Routh-1884}
        E. J. Routh,
        On some Properties of certain Solutions of a Differential Equation of the Second Order,
        Proc. London Math. Soc.
        {\bf S1-16} no.~1,
        245--261.
%
\bibitem{Simon-book-1}
        B. Simon,
        {\it Orthogonal polynomials on the unit circle. Part 1},
        American Mathematical Society Colloquium Publications, 54, Part 1,
        Amer. Math. Soc., Providence, RI, 2005.
%
\bibitem{Ranga-symmetric-OP-laurent-polynomials}
        A. Sri Ranga,
        Symmetric orthogonal polynomials and the associated orthogonal $L$-polynomials,
        Proc. Amer. Math. Soc.
        {\bf 123} (1995) no.~10, 3135--3141.
%
\bibitem{Ranga-McCabe-symmetry-strong-distributions}
        A. Sri Ranga, E. X. L. de Andrade, J. H. McCabe,
        Some consequences of a symmetry in strong distributions.
        J. Math. Anal. Appl.
        193 (1995), no.~1, 158--168.
%
\bibitem{Ranga-companion-orthogonal-polynomial}
        A. Sri Ranga,
        Companion orthogonal polynomials,
        J. Comput. Appl. Math. {\bf 75} (1996) no.~1, 23--33.
%
\bibitem{Ranga-arxiv-Routh-Romanovski}
        A. Sri Ranga,
        Two families of orthogonal polynomials on the unit circle from basic hypergeometric functions, 
        available at,
        http://arxiv.org/pdf/1611.08064v1.


%
\bibitem{Wall_book}
        H. S. Wall, {\it Analytic Theory of Continued Fractions},
        D. Van Nostrand Company, Inc., New York, NY, 1948.
%
\bibitem{Zhedanov-rat-spectral-JCAM-1997}
        A. Zhedanov,
        Rational spectral transformations and orthogonal polynomials,
        J. Comput. Appl. Math.
        {\bf 85} (1997), no.~1, 67--86.
%
%
%

\end{thebibliography}
\end{document}